\colorlet{shadecolor}{red!20}
\setlist[description]{leftmargin=\parindent,labelindent=\parindent}
\newtheorem{thm}{Theorem}[section]
\newtheorem{cor}[thm]{Corollary}
\newtheorem{lem}[thm]{Lemma}
\newtheorem{prop}[thm]{Proposition}
\newtheorem{defn}[thm]{Definition}
\newtheorem{question}[thm]{Question}
\newtheorem{oss}{Observation}[section]
\newtheorem{exam}{Example}[section]
\newtheorem*{thm*}{Theorem}
\newcommand{\U}{\mathcal{U}}
\newcommand{\N}{\mathbb{N}}
\newcommand{\Z}{\mathbb{Z}}
\newcommand{\R}{\mathbb{R}}
\newcommand{\V}{\mathcal{V}}
\newcommand{\bN}{\beta\mathbb{N}}
\newcommand{\bZ}{\beta\mathbb{Z}}
\newcommand{\FS}{\operatorname{FS}}
\newcommand{\FP}{\operatorname{FP}}
\newcommand{\Pfin}{\wp_{\operatorname{fin}} }
\newcommand{\MT}{\operatorname{MT}}
\renewcommand{\cite}{\citep}
\numberwithin{equation}{section}
\begin{document}

% \title[short text for running head]{full title}
\title{Rado equations solved by linear combinations of idempotent ultrafilters}

\author{Lorenzo Luperi Baglini %
\fnref{fn1}}
\address{Dipartimento di Matematica, Universit\`{a} di Milano, Via Saldini 50, 20133 Milano, Italy.}
\ead{lorenzo.luperi@unimi.it}
\cortext[cor1]{Corresponding author}

\author{Paulo Henrique Arruda\corref{cor1}\fnref{fn2}}
\address{Universit\"{a}t Wien, Fakult\"{a}t f\"{u}r Mathematik, Oskar-Morgenstern-Platz 1, 1090 Vienna, Austria.}
\ead{paulo.arruda@univie.ac.at}

\fntext[fn1,fn2]{Both supported by grant P30821-N35 of the Austrian Science Fund FWF.}

\begin{keyword}
Partition regularity\sep ultrafilters\sep Rado equations. \MSC[2020]{Primary 5D10, 11D04; Secondary 54D35, 54D80.}
\end{keyword}

\date{}

\begin{abstract}
We fully characterise the solvability of Rado equations inside linear combinations $a_{1}\U+\dots+ a_{n}\U$ of idempotent ultrafilters $\U\in\beta\Z$ by exploiting known relations between such combinations and strings of integers. This generalises a partial characterisation obtained previously by Mauro Di Nasso.
\end{abstract}

\maketitle
\section{Introduction}

A long studied problem in combinatorics deals with the partition regularity of Diophantine equations.

\begin{defn} Let $S$ be either $\N$ or $\Z$, let $m\in\N$ and let $P\in\Z\left[x_{1},\dots,x_{m}\right]$. We say that the equation $P\left(x_1,\dots,x_m\right)=0$ is partition regular on $S$ if for every finite partition $C_1,\dots,C_r$ of $S$  one can find an $i\in\{1,\dots,r\}$ and $a_1,\dots,a_m\in C_i\setminus\{0\}$ such that $P\left(a_1,\dots,a_m\right)=0$.
\end{defn}

The earliest known result about the partition regularity of equations is due to I. Schur (see \cite{schur}) who established the partition regularity of the equation $x+y=z$ on $\N$. Later, Rado generalised Schur's Theorem and provided a necessary and sufficient condition for a finite system of linear homogeneous Diophantine equations to be partition regular \cite{Rado1933}. For a single linear homogeneous equation, Rado's result reads as follows:

\begin{thm} (Rado's Theorem) Let $S$ be either $\N$ or $\Z$. A linear homogeneous equation $c_1x_1+\dots + c_mx_m = 0$, with integer coefficients, is partition regular on $S$ if and only if there is a non-empty $I\subseteq\{1,\dots,m\}$ such that $\sum_{i\in I}c_i = 0$.
\end{thm}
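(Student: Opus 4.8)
The plan is to prove the two implications separately: the easy (necessity) direction by exhibiting an explicit bad colouring, and the hard (sufficiency) direction by importing a genuine Ramsey-theoretic statement. Throughout I assume all the $c_i$ are nonzero, as is standard in Rado's theorem; the condition then forces any witnessing $I$ to have at least two elements.

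For necessity I argue contrapositively: assuming no nonempty $I\subseteq\{1,\dots,m\}$ has $\sum_{i\in I}c_i=0$, I build a finite colouring with no monochromatic solution. Fix a prime $p>\sum_{i=1}^{m}|c_i|$ and colour each nonzero $n$ by the pair $\bigl(\operatorname{sgn}(n),\,b\bmod p\bigr)$, where $n=\pm p^{k}b$ with $p\nmid b$; this records the sign together with the last nonzero base-$p$ digit, hence finitely many colours. Suppose $a_1,\dots,a_m$ is monochromatic and solves the equation. After factoring out the common sign (the equation is homogeneous) I may take all $a_i>0$, write $a_i=p^{k_i}b_i$ with $b_i\equiv d\not\equiv 0\pmod p$, and set $k=\min_i k_i$, $I=\{i:k_i=k\}\neq\emptyset$. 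Dividing $\sum_i c_ia_i=0$ by $p^{k}$ and reducing modulo $p$ kills every term with $k_i>k$ and leaves $d\sum_{i\in I}c_i\equiv 0\pmod p$; since $p\nmid d$ this gives $\sum_{i\in I}c_i\equiv 0\pmod p$, and $p>\sum|c_i|$ upgrades it to the exact equality $\sum_{i\in I}c_i=0$, contradicting the assumption. Hence the colouring admits no monochromatic solution.

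For sufficiency I assume $\sum_{i\in I}c_i=0$ for some nonempty $I$ and must produce a monochromatic solution for every finite colouring. The route best suited to this paper runs through idempotent ultrafilters: fixing an idempotent $\U\in\bZ$, every colour class lying in $\U$ is an $\FS$-set, so by Hindman's Finite Sums Theorem I obtain a sequence $(y_n)$ all of whose finite sums receive one colour. I then assign to the variables values drawn from $\FS((y_n))$, using the identity $\sum_{i\in I}c_i=0$ so that the block of variables indexed by $I$ contributes a controllable quantity cancelling the contribution of the remaining block indexed by $J=\{1,\dots,m\}\setminus I$; closure of the $\FS$-set under the relevant sums keeps everything monochromatic and nonzero. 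The same solution is reached by Rado's original method, replacing Hindman's theorem with van der Waerden's theorem and solving the equation inside a sufficiently long monochromatic arithmetic progression.

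The main obstacle is the sufficiency direction: this is where the genuine combinatorial content sits, since it is essentially equivalent to a partition theorem, whereas necessity is a self-contained counting argument. The delicate point is organising the assignment of $\FS$-values (equivalently, the placement inside the progression) so that the algebra imposed by $\sum_{i\in I}c_i=0$ matches the combinatorial freedom supplied by the Ramsey theorem, and doing so uniformly for $S=\N$ and $S=\Z$, where over $\N$ the cancellation must come entirely from the mixed signs of the coefficients.
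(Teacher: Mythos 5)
The paper does not actually prove this theorem: it is imported from Rado's 1933 paper by citation, so there is no internal argument to compare yours against. Your necessity direction is the standard and correct one: colouring by sign together with the last nonzero base-$p$ digit for a prime $p>\sum_i|c_i|$, then dividing out $p^{\min_i k_i}$ and reducing modulo $p$, is exactly the classical ``no vanishing partial sum $\Rightarrow$ bad colouring'' argument for a single equation, and every step you describe goes through.

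The sufficiency direction, however, has a genuine gap, and it is one that this very paper is built around. You propose to find, via an idempotent ultrafilter and Hindman's theorem, a monochromatic set of the form $\FS\big((y_n)_{n\in\N}\big)$ and then to solve the equation inside it using $\sum_{i\in I}c_i=0$. But a Rado equation need not have any solution inside an arbitrary $\FS$-set. Take $x_1-x_2+2x_3=0$ (coefficients $1,-1,2$, with $I=\{1,2\}$): if $(y_n)_{n\in\N}$ grows fast enough, say $|y_{n+1}|>10\sum_{t\le n}|y_t|$, then comparing the coefficient of each $y_n$ on the two sides of $x_2-x_1=2x_3$ forces every coefficient to vanish separately, hence $x_3=0$, so $\FS\big((y_n)_{n\in\N}\big)$ contains no solution. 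This is precisely the content of Lemma \ref{lemma:lingen_lemma_2} and Corollary \ref{shursequation} of the paper: for $\gcd(c,d)=1$ the equation $c(x_1-x_2)+dx_3=0$ is witnessed by linear combinations of all idempotents only when $|c|=|d|=1$, so arbitrary idempotents (equivalently, arbitrary IP-sets) do not suffice. The same objection applies to your van der Waerden variant: a long monochromatic progression $a, a+d,\dots,a+\ell d$ alone cannot absorb the block indexed by $J=\{1,\dots,m\}\setminus I$, whose contribution must be cancelled by values of the form $\lambda d$, and plain van der Waerden gives no control over the colour of $\lambda d$. The correct proof needs a strictly stronger input: either the strengthened van der Waerden theorem producing a monochromatic progression together with a dilate of its common difference in the same class (Rado's original route, via induction on the number of colours), or, in the ultrafilter language of this paper, a \emph{minimal} (or essential) additive idempotent rather than an arbitrary one, as in item (iv) of Theorem \ref{theorem:facts_about_witnessing_pr}.
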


Motivated by Rado's Theorem, we introduce the following definition.
\begin{defn}
A linear homogeneous polynomial $P(x_1,\dots,x_m) = \sum_{i=1}^{m} c_ix_i$, such that $c_i\in\Z\setminus\{0\}$, is said to be a Rado polynomial if there is a non empty $I\subseteq\{1,\dots,m\}$ such that $\sum_{i\in I}c_i = 0$.
If $P$ is Rado, we will also say that the equation $P(x_1,\dots,x_m)=0$ is a Rado equation.
\end{defn}

Following the works of F. Galvin and S. Glazer (see \cite[Section 5.6]{HindmanStrauss2011} for historical remarks), ultrafilters on semigroups have been one of the tools used for the study of partition regularity of equations. We identify the set of all ultrafilters on a set $S$ to $\beta S$, namely the \v{C}ech-Stone compactification of $S$ as a discrete space. The general relationship between partition regularity and ultrafilters, that can be made even more precise in an abstract setting (see e.g. \cite[Theorem 5.7]{HindmanStrauss2011} or \cite[Proposition 1.8]{LuperiBagliniDiNasso2018}), can be formulated for Diophantine equations as follows:

\begin{thm}\label{sempliciotto} Let $S$ be either $\N$ or $\Z$, let $m\in\N$, and let $P\in\Z\left[x_{1},\dots,x_{m}\right]$ be given. The equation $P\left(x_{1},\dots,x_{m}\right)=0$ is partition regular on $S$ if and only if there is an ultrafilter $\U\in\beta S$ such that for every $A\in\U$, one can find $a_1,\dots,a_m\in A$ satisfying $P\left(a_1,\dots,a_m\right)=0$.
\end{thm}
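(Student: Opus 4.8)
The plan is to prove both directions of the equivalence, working with the standard dictionary between partition regularity and ultrafilter membership. Throughout, I would use the notation $A^{\star}=\{a\in A : P \text{ has a solution in } A\}$ informally, but more precisely I would phrase everything in terms of the family $\mathcal{S}$ of sets $A\subseteq S$ such that there exist $a_1,\dots,a_m\in A$ with $P(a_1,\dots,a_m)=0$. The key observation is that partition regularity of $P=0$ on $S$ is exactly the assertion that $\mathcal{S}$ is a \emph{partition regular family}: whenever $S=C_1\cup\dots\cup C_r$, some $C_i\in\mathcal{S}$.

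For the backward direction (existence of $\U$ implies partition regularity), I would argue as follows. Suppose $\U\in\beta S$ witnesses that every $A\in\U$ contains a solution, and let $C_1,\dots,C_r$ be a finite partition of $S$. Since $\U$ is an ultrafilter and $S=C_1\cup\dots\cup C_r\in\U$, exactly one block $C_i$ lies in $\U$. By hypothesis applied to this $A=C_i\in\U$, there are $a_1,\dots,a_m\in C_i$ with $P(a_1,\dots,a_m)=0$, which is precisely what partition regularity demands. This direction is short and essentially tautological once the ultrafilter axioms are invoked.

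For the forward direction (partition regularity implies existence of $\U$), the plan is to produce the desired ultrafilter by a compactness or maximality argument. First I would check that the family $\mathcal{S}$ is \emph{upward closed} (if $A\in\mathcal{S}$ and $A\subseteq B$ then $B\in\mathcal{S}$) and has the finite partition property just described. The goal is an ultrafilter $\U$ with $\U\subseteq\mathcal{S}$. I would consider the collection of all filters $\mathcal{F}$ on $S$ with $\mathcal{F}\subseteq\mathcal{S}$, ordered by inclusion; since the union of a chain of such filters is again such a filter, Zorn's Lemma yields a maximal one, $\U$. The heart of the matter is to verify that this maximal $\U$ is in fact an ultrafilter, i.e.\ that for every $A\subseteq S$ either $A\in\U$ or $S\setminus A\in\U$.

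The main obstacle is exactly this last verification, and it is where the partition-regularity hypothesis does its real work. Given $A\subseteq S$, I would suppose toward a contradiction that neither $A$ nor its complement can be added to $\U$ while staying inside $\mathcal{S}$; by maximality this means there are $F_1,F_2\in\U$ with $F_1\cap A\notin\mathcal{S}$ and $F_2\cap(S\setminus A)\notin\mathcal{S}$. Setting $F=F_1\cap F_2\in\U\subseteq\mathcal{S}$, the two sets $F\cap A$ and $F\setminus A$ partition $F$, and neither lies in $\mathcal{S}$. Extending this to a finite partition of all of $S$ (adjoining $S\setminus F$, split further if needed so that partition regularity applies) and invoking that $\mathcal{S}$ is partition regular, I would derive that some block must be in $\mathcal{S}$, contradicting that $F$ was chosen in $\mathcal{S}$ yet split into two non-$\mathcal{S}$ pieces. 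The delicate point is arranging the partition so that the contradiction is clean; alternatively one can phrase the maximality argument directly at the level of ultrafilters and check that $\mathcal{S}$ meets every element of $\U$, using that $\mathcal{S}$ is partition regular to rule out the bad case. Once $\U$ is shown to be an ultrafilter contained in $\mathcal{S}$, every $A\in\U$ satisfies $A\in\mathcal{S}$, i.e.\ contains a solution, completing the proof.
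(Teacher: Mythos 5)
Your backward direction is correct and is the standard one-line argument. Note, though, that the paper itself gives no proof of Theorem \ref{sempliciotto}; it cites \cite[Theorem 5.7]{HindmanStrauss2011} and \cite[Proposition 1.8]{LuperiBagliniDiNasso2018}, so what matters here is only whether your self-contained argument is sound. (A side remark that affects both directions but is really an imprecision of the statement rather than of your proof: for $S=\Z$ the principal ultrafilter at $0$ witnesses any homogeneous $P$ in the literal sense of the theorem, while the definition of partition regularity demands solutions in $C_i\setminus\{0\}$; one has to exclude $0$ or restrict to suitable ultrafilters for the two sides to match exactly.)

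The forward direction has a genuine gap, and it sits precisely at the point you flag as delicate. From maximality you correctly produce $F=F_1\cap F_2\in\U\subseteq\mathcal{S}$ with $F\cap A\notin\mathcal{S}$ and $F\setminus A\notin\mathcal{S}$, but this does \emph{not} contradict partition regularity. Partition regularity only constrains partitions of all of $S$: from $S=(F\cap A)\cup(F\setminus A)\cup(S\setminus F)$ you may only conclude $S\setminus F\in\mathcal{S}$, which is no contradiction, and no refinement of the partition fixes this. What your argument actually needs is the hereditary (coideal) property ``if $B\in\mathcal{S}$ and $B=B_1\cup B_2$ then some $B_i\in\mathcal{S}$'', and the solution family of an equation genuinely fails it: for Schur's equation $x+y=z$ one has $\{1,2,3\}\in\mathcal{S}$ but $\{1,3\}\notin\mathcal{S}$ and $\{2\}\notin\mathcal{S}$. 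Correspondingly, a maximal filter contained in $\mathcal{S}$ need not be an ultrafilter, so Zorn's Lemma applied to filters inside $\mathcal{S}$ cannot deliver the theorem as you set it up. The standard repair is a compactness argument on the dual family: partition regularity of the equation says exactly that no finite union of sets outside $\mathcal{S}$ covers $S$ (disjointify a cover to get a partition and use that $\mathcal{S}$ is upward closed), i.e.\ the family $\{S\setminus B : B\notin\mathcal{S}\}$ has the finite intersection property; extend it to an ultrafilter $\U$; then for every $B\notin\mathcal{S}$ we have $S\setminus B\in\U$, hence $B\notin\U$, so $\U\subseteq\mathcal{S}$ as required.
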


Motivated by the above result, we introduce a notation that we will often use in what follows.

\begin{defn}
	Let $S$ be either $\N$ or $\Z$, let $m\in\N$ and $P\in\Z\left[x_{1},\dots,x_{m}\right]$. We say that an ultrafilter $\U\in\beta S$ witnesses the partition regularity of the equation $P\left(x_{1},\dots,x_{m}\right)=0$ if for all $A\in \U$ there exist $a_1,\dots,a_m\in A$ satisfying $P\left(a_{1},\dots,a_{m}\right)=0$. In this case, we write $\U\models P\left(x_1,\dots,x_m\right)=0$.
\end{defn}

In recent years, the "qualitative" problem of finding which classes of ultrafilters witness the partition regularity of a given equation has become important due to some new techniques that allow building new partition regular equations from equations whose partition regularity is witnessed by a common ultrafilter. This kind of idea has been used, e.g., in \cite{BeiglbockBergelsonDownarowiczFish2009, DiNasso2014, LuperiBagliniDiNasso2018, LuperiBaglini2021,LuperiBaglini2013}. For instance, in \cite{LuperiBaglini2013} (see also \cite{LuperiBagliniDiNasso2018}), it was shown that any ultrafilter that witnesses the partition regularity of both $x+y=z$ and $uv=t$ will also witness the partition regularity of $x+y=uv$ among many others. 

In particular, in \cite{DiNasso2014}, Di Nasso, using a nonstandard framework and considerations about strings of integers close to those that we will use in Section \ref{section:Polynomials_solved_by_lin_comb_idemp}, has proven that certain linear combinations $a_{1}\U+\dots+ a_{n}\U$, with $a_{i}\in\N$, $\U\in\beta\N$, witness the partition regularity of a class of Rado equations.

\begin{thm}\cite[Theorem 1.2]{DiNasso2014}\label{theorem:DiNasso_linear_thm} Let $m>2$. For every $c_1,\dots,c_m\in\Z$ satisfying $c_1+\dots+c_m=0$, there are $a_1,\dots,a_{m-2}\in\N$ such that, for every additively idempotent ultrafilter $\U\in\bN$, the ultrafilter $a_1\U+\dots+ a_{m-2}\U$
witnesses the partition regularity of the equation $c_1x_1+\dots + c_mx_m=0$.
\end{thm}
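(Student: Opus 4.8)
The plan is to reduce membership in the combination $\V := a_1\U+\dots+a_{m-2}\U$ to an iterated ultrafilter-limit condition and then to a purely finite combinatorial problem about the coefficients. First I would record the basic identities for sums and dilations of ultrafilters: writing $\forall^{\U}w\,\varphi(w)$ for ``$\{w:\varphi(w)\}\in\U$'' and unwinding the definitions of $\V+\W$ and of the dilation $a\U$, one obtains, for any idempotent $\U\in\bN$ and any $A\subseteq\Z$,
\[
A\in a_1\U+\dots+a_{m-2}\U
\iff
(\forall^{\U}w_1)(\forall^{\U}w_2)\cdots(\forall^{\U}w_{m-2})\Big[\textstyle\sum_{i=1}^{m-2}a_iw_i\in A\Big].
\]
Thus, to prove $\V\models c_1x_1+\dots+c_mx_m=0$, it suffices to show that for every $A\in\V$ one can select witnesses from the nested $\U$-large sets furnished by the right-hand side so that $m$ numbers of the form $\sum_i a_i w_i^{(j)}$ simultaneously lie in $A$ and satisfy the equation.

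The second step exploits idempotency to convert these iterated choices into a single IP-structure, which is where the ``strings of integers'' enter. Given $A\in\V$, repeatedly applying the Galvin--Glazer/Hindman machinery to the $\U$-large sets appearing in the displayed condition yields one sequence $(t_k)_k$ with $\FS((t_k))$ contained in a member of $\U$, such that every admissible choice $w_i=\sum_{k\in B_i}t_k$ with increasing finite blocks $B_1<B_2<\dots<B_{m-2}$ produces $\sum_i a_iw_i\in A$; moreover, since $\U$ is idempotent, the blocks together with their internal subsums remain admissible. Each candidate element of $A$ is then encoded by assigning to every index $k$ a \emph{level} in $\{0,1,\dots,m-2\}$ (level $0$ meaning $t_k$ is unused, level $i$ meaning $t_k\in B_i$ and contributes $a_it_k$), and I would reduce the whole task to finding $m$ level-profiles $\ell_1,\dots,\ell_m$, each nondecreasing in $k$, together with coefficients $a_1,\dots,a_{m-2}$, so that at every index the \emph{column identity} $\sum_{j=1}^m c_j\,a_{\ell_j(k)}=0$ holds (with $a_0:=0$). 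Writing $x_j=\sum_k a_{\ell_j(k)}t_k\in A$ and regrouping $\sum_j c_jx_j$ by the index $k$, the column identity then forces $\sum_j c_jx_j=0$.

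The final and decisive step is to build the coefficients $a_1,\dots,a_{m-2}$ and the profiles from the hypothesis $\sum_{i=1}^m c_i=0$. The natural device is Abel summation: setting $S_j=c_1+\dots+c_j$, so that $S_0=S_m=0$, one has $\sum_j c_jx_j=\sum_{j=1}^{m-1}S_j(x_j-x_{j+1})$, which recasts the equation as a condition on the $m-1$ consecutive gaps weighted by the partial sums $S_j$; the count $m-2$ is exactly the number of free gaps remaining after the two forced relations $S_0=S_m=0$. I would use this to prescribe the staircase at which each variable climbs through the levels and to solve for $a_1,\dots,a_{m-2}$ making every column vanish. I expect this construction to be the main obstacle: the $a_i$ and the order in which the profiles increase must be chosen so that $\sum_j c_j a_{\ell_j(k)}=0$ holds simultaneously at every step while all profiles stay nondecreasing (so that the $x_j$ genuinely belong to $\FS((t_k))$), and this must hold uniformly in $\U$ and for an arbitrary member $A$, that is, robustly against the worst case in which the $t_k$ are additively independent. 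Verifying that the string/column construction can always be completed under the sole hypothesis $\sum_i c_i=0$ is the crux; once it is in place, the first two steps package it into the required ultrafilter statement.
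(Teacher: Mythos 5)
Your architecture coincides with the machinery this paper builds around the statement rather than with a proof of the statement itself: the paper does not reprove Di Nasso's theorem but cites it and later subsumes it in Theorem \ref{lingen}. Your step reducing membership in $a_1\U+\dots+a_{m-2}\U$ to iterated $\U$-quantifiers and then to Milliken--Taylor sums over blocks $B_1<\dots<B_{m-2}$ is exactly Theorem \ref{theorem:mt-system-linear-com-idem}, and your ``level profiles satisfying the column identity $\sum_j c_j a_{\ell_j(k)}=0$ at every $k$'' is precisely a matrix whose columns are coherent with $\sigma=(a_1,\dots,a_{m-2})$ and which annihilates $(c_1,\dots,c_m)^T$, i.e.\ a solution of $P$ in $\mathfrak{G}(\sigma)$ in the sense of Lemma \ref{lemma:string_solution_equiv}; your first two steps amount to the implication $(3)\Rightarrow(1)$ of Theorem \ref{lingen}, that is, Lemma \ref{lemma:lingen_lemma_1}. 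The problem is that you stop exactly where the proof has to happen: the construction of the $a_i$ and of the profiles from the hypothesis $c_1+\dots+c_m=0$ is announced, motivated by Abel summation, and then explicitly deferred (``verifying that the construction can always be completed \dots is the crux''). As written this is a plan with an admitted gap at the decisive step, not a proof.

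Two remarks on that gap. First, under the definition of $\models$ used in this paper the witnessing elements are not required to be distinct, and then the crux is immediate and the Abel-summation apparatus is unnecessary: take all $m$ profiles equal, say $\ell_j(k)=k$ for $k=1,\dots,m-2$ and every $j$; each column identity reads $a_k\sum_{j=1}^{m}c_j=0$, which holds by hypothesis, so $x_1=\dots=x_m=\sum_{i=1}^{m-2}a_it_i\in A$ solves the equation, and in fact every choice of $a_1,\dots,a_{m-2}$ works. Saying this explicitly would close your proof. Second, if what you are after is the injective form (pairwise distinct $x_j$), which is the real content behind Di Nasso's theorem and the subject of Section \ref{exampleeee}, then the step you defer is not merely technical: it can fail. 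For $x_1+x_2-2x_3=0$ one has $m-2=1$, and the growth condition on $(t_k)_{k\in\N}$ forces every solution inside $\MT((a_1),(t_k)_{k\in\N})$ to be constant, so no choice of $a_1$ yields distinct witnesses for every idempotent $\U$; distinct columns require either additional summands or arithmetic conditions on the $c_j$ (compare Lemma \ref{facile} and Proposition \ref{ultimo}). Hence your expectation that the construction ``can always be completed under the sole hypothesis $\sum_i c_i=0$'' must either be read in the non-injective sense, where it is trivial, or is false as stated. A minor separate point: your profiles are not literally nondecreasing in $k$, since level $0$ (unused indices) may be interleaved; the correct condition is that the nonzero levels, read in increasing $k$, are nondecreasing and that each level $1,\dots,m-2$ occurs at least once.
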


The above result is the major inspiration for this work; in fact, in this paper, we extend Di Nasso's result by finding a complete characterisation of which Rado equations are solved by linear combinations of the form $a_{1}\U+\dots+ a_{n}\U$ for $a_{1},\dots,a_{n}\in \Z$ and $\U\in\beta\Z$ an additively idempotent ultrafilter; hence dropping Di Nasso's assumptions that $c_{1}+\dots+c_{m}=0$ and $\U\in\bN$. This characterisation will use the relationship between the linear combinations $a_{1}\U+\dots+ a_{n}\U$ and certain sets of strings of integers, i.e. $k$-tuples of integers, for $k\geq 1$. As an interesting consequence of our main Theorem \ref{lingen}, see Corollary \ref{shursequation}, we prove that, up to multiplication by a scalar, the Schur's Equation $x+y=z$ is the only linear homogeneous equation in three variables satisfying the following:
\begin{itemize}
    \item there exists $k\in\N$ and $a_1,\dots,a_k$ such that for all additively idempotent ultrafilter $\U\in\bZ$, $a_1\U+\dots+ a_k\U$ solves the equation $x+y=z$.
\end{itemize}
Notice that, as we will show, for all $k\in\N$, $a_1,\dots,a_k\in\Z\setminus\{0\}$ and  additively idempotent ultrafilter $\U\in\bZ$, $a_1\U+\dots+ a_k\U$ solves the Schur's Equation.

The paper is organised as follows: in Section \ref{ultra algebra} we recall all the basic results about idempotent ultrafilters that we will need, particularly the definition of a strongly summable ultrafilters on semigroups; in Section \ref{strings} we will talk about the relationships between strings of integers, linear combinations of idempotent ultrafilters and solvability of equations by strings of integers; in Section \ref{section:Polynomials_solved_by_lin_comb_idemp} we will present our main result, namely the characterisation of which Rado equations are solved by which kind of linear combinations of idempotents. Our result uses the existence of strongly summable ultrafilters, which is independent from ZFC. In Section \ref{FI} we will show how a minor modification of our main result can be obtained in ZFC. Finally, in Section \ref{exampleeee} we show some explicit examples, proving also that there are Rado equations $P\left(x_{1},\dots,x_{m}\right)=0$ such that for all $\left(a_{1},\dots,a_{n}\right)$ there are idempotents $\U\in\beta\Z$ so that $a_{1}\U+\dots+ a_{n}\U$ does not witness the partition regularity of $P\left(x_{1},\dots,x_{m}\right)=0$.

\section{Ultrafilters and their algebra}\label{ultra algebra}

We assume that the reader knows the basic notions of the algebra on the \v{C}ech-Stone compactification of semigroups; in this Section, we only recall some known facts about idempotent ultrafilters and their connection with the partition regularity of equations. As it is well known, idempotent ultrafilters are related with the concept of finite products on semigroups: if $(S,\cdot)$ is a semigroup, let $\Pfin(S)$ denotes the collection of all non-empty finite subsets of $S$; given a sequence $(x_n)_{n\in\N}$ of elements of $S$ and a non-empty finite $F\subseteq\N$ enumerated as $n_1<\dots<n_k$, let 
\begin{equation*}
    \prod_{n\in F} x_n = x_{n_1}\cdot \dots\cdot x_{n_k}.
\end{equation*}
Define the set of all finite products of $(x_n)_{n\in\N}$ as the set
\begin{equation*}
	\FP\big((x_n)_{n\in\N}\big) = \left\{\prod_{n\in F} x_n\mid F\in\Pfin(\N)\right\}.
\end{equation*}
If $S$ is an Abelian semigroup, e.g. $(\N,+)$ or $(\Z,+)$, and the additive notation is adopted instead, this notion is translated to \emph{finite sums} as defined by 
\begin{equation*}
	\FS\big((x_n)_{n\in\N}\big) = \left\{\sum_{n\in F} x_n \mid  F\in\Pfin(\N)\right\}.
\end{equation*}
A set $A\subseteq S$ that contains the finite products of some injective sequence is called an \emph{IP-set}. It is well known that a subset $A$ of $S$ contains $\FP\left((x_{n})_{n\in\N}\right)$ for some injective sequence $\left(x_{n}\right)_{n\in\N}$ if and only if $A$ belongs to some idempotent ultrafilter $\U\in\beta S$ (see e.g. \cite[Theorem 5.12 ]{HindmanStrauss2011}). As a consequence of this fact, one gets the famous Hindman's Theorem\footnote{F. Galvin and S. Glazer were the first to give a proof of Hindman's Theorem, in the case $(S,\cdot)=(\mathbb{N},+)$, using idempotent ultrafilters.}:

\begin{thm} (Hindman's Theorem) For every finite partition $C_{1},\dots, C_{r}$ of a semigroup $S$ there exists an injective sequence $(x_n)_{n\in\N}$ and $i\in\{1,\dots,r\}$ such that 
\begin{equation*}
\FP((x_n)_{n\in\N})\subseteq C_i.
\end{equation*}
\end{thm}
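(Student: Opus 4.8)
The plan is to read Hindman's Theorem off the two ingredients already in hand: that $(\beta S,\cdot)$ is a compact right-topological semigroup and hence contains an idempotent, together with the stated equivalence that the members of an idempotent ultrafilter are exactly the sets containing $\FP((x_n)_{n\in\N})$ for some injective sequence. The combinatorial heart of the statement is thus pushed entirely into that equivalence, and what remains is a short ultrafilter argument.

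First I would fix an idempotent ultrafilter $\U\in\beta S$, whose existence follows from the Ellis--Namakura lemma applied to the compact right-topological semigroup $(\beta S,\cdot)$. This is the structural input supplied by the algebra of the \v{C}ech--Stone compactification; it is where the genuine work sits, but it is part of the background we are assuming rather than something to be reproved here.

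Next, given a finite partition $S=C_1\cup\dots\cup C_r$, I would use the defining properties of $\U$. Since $S\in\U$ and $\U$ is a (finitely additive, prime) ultrafilter, exactly one cell lies in $\U$; say $C_i\in\U$. Formally: if no $C_j$ were in $\U$, then each complement $S\setminus C_j$ would belong to $\U$, and the finite intersection $\bigcap_{j=1}^{r}(S\setminus C_j)=S\setminus\bigcup_{j=1}^{r}C_j=\emptyset$ would also lie in $\U$, contradicting $\emptyset\notin\U$.

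Finally, because $C_i\in\U$ and $\U$ is idempotent, the cited characterisation of IP-sets yields an injective sequence $(x_n)_{n\in\N}$ with $\FP((x_n)_{n\in\N})\subseteq C_i$, which is exactly the desired conclusion. I expect no real obstacle beyond the existence of the idempotent $\U$: once that and the IP-set equivalence are granted, the passage through a monochromatic cell is immediate, and indeed this is precisely the Galvin--Glazer style of argument alluded to in the footnote.
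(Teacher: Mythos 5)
Your proposal is correct and is exactly the Galvin--Glazer argument the paper has in mind: it presents Hindman's Theorem as an immediate consequence of the stated equivalence between membership in an idempotent ultrafilter and containing $\FP\big((x_n)_{n\in\N}\big)$ for an injective sequence, combined with the fact that one cell of any finite partition must lie in a fixed idempotent. No discrepancy with the paper's (implicit) proof.
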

However, in general, given an idempotent ultrafilter $\U\in\beta S$, the set of finite products of some injective sequence itself will not belong to $\U$, even in the cases that $S$ is either $(\N,+)$ or $(\Z,+)$. To have this stronger property, we have to consider a special class of ultrafilters (see \cite[Chapter 12]{HindmanStrauss2011}), namely \emph{strongly summable ultrafilters}:

\begin{defn}\label{definition:ss-ultra-N}
	An ultrafilter $\U$ on a semigroup $S$ is said to be a strongly productive ultrafilter if for every $A\in\U$ one can find an injective sequence $(x_n)_{n\in\N}$ of elements of $S$ such that $\FP\big((x_n)_{n\in\N}\big)\subseteq A$ and $ \FP\big((x_n)_{n\in\N}\big)\in \U$. If $S$ is Abelian and the additive notation is adopted, we call $\U$ a strongly summable ultrafilter instead.
\end{defn}

One can easily deduce that, if $\U\in\bZ$ is a strongly summable ultrafilter, either $\U$ or $-\U$ is a strongly summable ultrafilter\footnote{Given a $\U\in\bZ$, we denote by $-\U$ the ultrafilter $\U_{-1}\cdot \U$, where $\U_{-1}$ is the principal ultrafilter generated by $-1$; alternatively, $A\in -\U$ iff $\{k\in\Z:-k\in A\}\in\U$. Moreover, given a $\V\in\bZ$, we write $\V-\U$ as an abbreviation for $\V+(-\U)$.} in $\bN$.

\begin{thm}\cite[Theorem 2.3]{HindmanProstatovStrauss}, \cite[Lemma 2.2]{HindmanLeggetteJones}\label{theorem:ss-ultra-idempotent} If $S$ is a free semigroup or a countable Abelian group, then every strongly productive ultrafilter on $S$ is idempotent.
\end{thm}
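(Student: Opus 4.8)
The plan is to verify the idempotency criterion directly, exploiting that strong productivity hands us, for each set in the ultrafilter, a finite-products set that is itself a member of $\U$. I work multiplicatively for a general (free) semigroup $S$ and pass to additive notation when $S$ is an abelian group; the two arguments are formally identical except for the structural input used at the single delicate step.

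First I recall the standard product criterion: an ultrafilter $\U\in\beta S$ is idempotent, i.e. $\U\cdot\U=\U$, as soon as $\U\subseteq\U\cdot\U$, since both are maximal filters; and $A\in\U\cdot\U$ precisely when $\{y\in S : y^{-1}A\in\U\}\in\U$, where $y^{-1}A=\{z\in S : yz\in A\}$. So it suffices to show that every $A\in\U$ lies in $\U\cdot\U$. Fix $A\in\U$. By strong productivity choose an injective $(x_n)_{n\in\N}$ with $B:=\FP\big((x_n)_{n\in\N}\big)\subseteq A$ and $B\in\U$; since $\U\cdot\U$ is upward closed and $B\subseteq A$, it is enough to prove $B\in\U\cdot\U$. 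Writing $T_N:=\FP\big((x_n)_{n\geq N}\big)$ for the tail products, the elementary head--tail identity gives, for any $y=\prod_{n\in F}x_n\in B$ with $N:=\max F$, the inclusion $T_{N+1}\subseteq y^{-1}B$: indeed for $z=\prod_{n\in G}x_n$ with $G\subseteq\{N+1,N+2,\dots\}$ one has $\max F<\min G$, so $yz=\prod_{n\in F\cup G}x_n\in B$.

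Consequently, once I know that every tail $T_N$ belongs to $\U$, each $y\in B$ satisfies $y^{-1}B\in\U$, so $B\subseteq\{y : y^{-1}B\in\U\}$ and the latter set is in $\U$; this yields $B\in\U\cdot\U$ and closes the argument. Thus the whole proof reduces to the single claim that $T_N\in\U$ for every $N$. I reformulate this claim in terms of a pushforward ultrafilter: let $f\colon B\to\N$ send each element to the least index occurring in its (canonical) representation as a finite product, so that $T_N=f^{-1}\big([N,\infty)\big)$. Then $T_N\in\U$ for all $N$ is equivalent to the pushforward ultrafilter $f_{\ast}\U$ on $\N$ being nonprincipal, i.e. to the assertion that no single layer $L_N:=f^{-1}(\{N\})=\{x_N\}\cup x_N\cdot T_{N+1}$ belongs to $\U$. (That $\U$ is nonprincipal, hence that singletons are excluded, is immediate, since the $\FP$-set of an injective sequence is infinite.)

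The heart of the matter --- and the step I expect to be the main obstacle --- is exactly ruling out $L_N\in\U$, and it is here that the hypothesis on $S$ enters. Assuming for contradiction that $L_N\in\U$, I apply strong productivity a second time, now to $L_N$, obtaining an injective $(y_k)_{k\in\N}$ with $\FP\big((y_k)_{k\in\N}\big)\subseteq L_N$ and this set in $\U$; in particular $y_0$ and $y_0 y_1$ both lie in $L_N$, hence both have least index exactly $N$. Comparing the representation forced on $y_0y_1=(x_N\cdots)(x_N\cdots)$ by membership in $L_N\subseteq B$ with its obvious factorisation produces an impossible product: in the free semigroup this follows from unique readability of words, while for a countable abelian group it follows from the uniqueness of finite sums enjoyed by the generating sequences of a strongly summable ultrafilter (the content of the Hindman--Protasov--Strauss analysis), the point in both cases being that the doubled generator $x_N$ cannot be reabsorbed into a genuine $0/1$ finite product (resp. sum) over the strictly later indices. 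This contradiction shows $L_N\notin\U$ for every $N$, so $f_{\ast}\U$ is nonprincipal, every tail $T_N$ lies in $\U$, and the reduction above delivers $\U\cdot\U=\U$. The genuinely delicate ingredient throughout is the well-definedness of $f$ together with this factorisation comparison, which is precisely why the statement is restricted to free semigroups and to abelian groups rather than to arbitrary $S$.
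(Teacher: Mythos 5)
The paper does not actually prove this statement --- it is imported verbatim from the two cited sources --- so your attempt can only be measured against the standard proofs there. Your overall skeleton is the correct one and matches them: idempotency of $\U$ reduces, via the head--tail inclusion $T_{N+1}\subseteq y^{-1}B$ for $y=\prod_{n\in F}x_n$ with $\max F=N$, to the single claim that every tail $T_N=\FP\big((x_n)_{n\geq N}\big)$ belongs to $\U$. That reduction is sound and, as you note, does not require uniqueness of representations.

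The gap is that the one step carrying all the content --- that the tails lie in $\U$ --- is not proved. Your map $f$ (``least index of the canonical representation'') is in general not well defined: elements of $\FP\big((x_n)_{n\in\N}\big)$ may admit several representations as products over index sets, and ``unique readability'' in a free semigroup lives at the level of letters, not at the level of the arbitrary words $x_n$. Concretely, in the free semigroup on $\{a,b\}$ the injective sequence $x_1=ab$, $x_2=ba$, $x_3=abba$ satisfies $x_3=x_1x_2$, so the least index of $abba$ is ambiguous; in $(\Z,+)$ take $x_1=1$, $x_2=2$, $x_3=3$; and in the free semigroup on one generator the statement already contains the classical (and nontrivial) theorem that strongly summable ultrafilters on $\N$ are idempotent, where no readability argument is available. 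Consequently the identities $T_N=f^{-1}\big([N,\infty)\big)$ and $L_N=\{x_N\}\cup x_N\cdot T_{N+1}$ are unjustified, and your final contradiction (``the doubled generator $x_N$ cannot be reabsorbed'') is precisely the assertion that finite products over the chosen sequence are unique --- which for abelian groups you support by invoking ``the content of the Hindman--Protasov--Strauss analysis'', i.e.\ the proof of the very theorem at hand. Showing that a strongly productive ultrafilter admits generating sequences with sufficiently unique finite products is the entire difficulty: for $(\N,+)$ one extracts sequences obeying a growth condition $|x_{n+1}|>4\,\bigl|\sum_{t\leq n}x_t\bigr|$ (compare Lemma~\ref{lemma:ss_ultrafilter_main_lemma}, quoted from Hindman and Strauss), and for countable abelian groups and free semigroups it is the substance of the two papers cited in the statement. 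As written, your argument presupposes this and therefore does not establish the theorem.
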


As a consequence of Theorem \ref{theorem:ss-ultra-idempotent}, every strongly summable ultrafilter on $(\N,+)$ or $(\Z,+)$ is idempotent. In \cite[Theorem 1.1]{BretonLupini}, the authors provide stronger generalisations of Theorem \ref{theorem:ss-ultra-idempotent}; nevertheless, as observed in \cite[Conjecture 1.6]{BretonLupini}, it is unknown, at the present time, if Theorem \ref{theorem:ss-ultra-idempotent} holds for any semigroup.

A. Blass and N. Hindman proved that the existence of a strongly summable ultrafilters on $\N$ implies the existence of P-points on $\bN$ \cite[Sections 12.3 and 12.5]{HindmanStrauss2011}; as proved by S. Shelah \cite{Wimmers}, the existence of P-points on $\bN$ cannot be obtained within ZFC, thus the existence of strongly summable ultrafilters is independent from ZFC. It is attributed to E. van Douwen \cite[Sections 12.2 and 12.5]{HindmanStrauss2011} that if either the Continuum Hypothesis or the Martin's Axiom hold, then  there is a strongly summable ultrafilter on $\N$, hence the existence of strongly summable ultrafilters\footnote{Notice that several of the mentioned proofs were done for \emph{union ultrafilters} \cite[Definition 12.30]{HindmanStrauss2011}. Since the notions of union ultrafilters and strongly summable ultrafilters coincide on $\N$, one can easily derive the existence of the latters from the existence of the formers.} is consistent with ZFC. More recently, in \cite{Eisworth}, T. Eisworth proved that $\operatorname{cov}(\mathcal{M})=\mathfrak{c}$  implies\footnote{If $\mathcal{M}$ is the ideal of meagre sets of $\R$, the cardinal $\operatorname{cov}(\mathcal{M})$ is the covering of $\mathcal{M}$, defined as $\min\{|\mathcal{C}|:\mathcal{C}\subseteq\mathcal{M}\text{ and }\bigcup\mathcal{C}=\R\}$. The equality $\operatorname{cov}(\mathcal{M})=\mathfrak{c}$ is equivalent to the assertion that Martin's Axiom holds for countable partial orders, which is independent from ZFC. The assertion $\operatorname{cov}(\mathcal{M})<\mathfrak{c}$ is equivalent to the fact that Martin's Axiom fails for countable partial orders \cite[Section 1.1]{Breton} and it is also independent from ZFC.} the existence of strongly summable ultrafilters on $\N$  (see also \cite[Theorem 2.8]{Breton}), thus weaking the assuption of the Martin's Axiom. It is also known that  $\operatorname{cov}(\mathcal{M})<\mathfrak{c}$ is consistent with the existence of strongly summable ultrafilters on Abelian groups \cite[Section 4.3]{Breton}.

Theorem \ref{theorem:ss-ultra-idempotent} and the Lemma \ref{lemma:ss_ultrafilter_main_lemma} below are the only technical results about strongly summable ultrafilters that we will use. The proof of the Lemma \ref{lemma:ss_ultrafilter_main_lemma} is almost identical to the proof of \cite[Lemma 12.20 ]{HindmanStrauss2011}, which originally was proved for $\U\in\bN$ and $k=4$. 

\begin{lem}\label{lemma:ss_ultrafilter_main_lemma}
	Let $k\geq 4$, $\U\in\beta\Z$ be a strongly summable ultrafilter and $A\in\U$. Then there is an injective sequence $(x_n)_{n\in\N}$ in $\Z$ such that
	\begin{enumerate}
		\item $\FS\big((x_n)_{n\in\N}\big)\subseteq A$;
		\item for each $m\in\N$, $\FS\big((x_n)_{n\geq m}\big)\in\U$;
		\item for each $n\in\N$, $|x_{n+1}| > k|\sum_{t=1}^{n}x_t|$.
	\end{enumerate}
\end{lem}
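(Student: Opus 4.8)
The plan is to follow the recursive construction of \cite[Lemma 12.20]{HindmanStrauss2011} essentially verbatim, making only three adaptations: the ground set is $\Z$ rather than $\N$, so I replace every inequality between magnitudes by one between absolute values; the idempotency of $\U$, which the argument needs repeatedly, is now supplied by Theorem \ref{theorem:ss-ultra-idempotent} (as $\Z$ is a countable Abelian group); and the constant $4$ is replaced by the arbitrary $k\geq 4$, which affects nothing since the only place the constant enters is the choice of a cofinite set.

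Before the construction I would record two facts. First, since $\U$ is idempotent, for every $B\in\U$ the set $B^{\star}=\{b\in B:-b+B\in\U\}$ again lies in $\U$, and $-b+B^{\star}\in\U$ for every $b\in B^{\star}$; this is the standard device for forcing finite sums into a prescribed member of $\U$. Second, $\U$ is free, so every cofinite subset of $\Z$ belongs to $\U$; in particular $\{y\in\Z:|y|>k|s|\}\in\U$ for every fixed $s\in\Z$, which is what allows me to impose condition (3).

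I would then build $(x_n)_{n\in\N}$ together with a decreasing chain $B_0\supseteq B_1\supseteq\cdots$ of members of $\U$, where $B_0\subseteq A^{\star}$ is a set of finite sums produced by strong summability and each subsequent $B_{n}$ is a set of finite sums $\FS\big((y^{(n)}_i)_i\big)$ obtained by applying strong summability inside the previous set. At stage $n+1$, writing $s_n=\sum_{t=1}^{n}x_t$, I intersect $B_n$ with the cofinite set $\{y:|y|>k|s_n|\}$ and with a $\star$-refinement, choose $x_{n+1}$ in the resulting member of $\U$, and re-extract, via strong summability, a finite-sums set $B_{n+1}\subseteq(-x_{n+1}+B_n)\cap B_n$ lying in $\U$. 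The growth condition forces $|x_n|$ to be strictly increasing, hence $(x_n)$ is injective; the $\star$-bookkeeping forces every finite sum $\sum_{t\in F}x_t$ into $A$, giving (1); and (3) holds by the choice of $x_{n+1}$.

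The delicate point, and the step I expect to be the main obstacle, is property (2): that every tail $\FS\big((x_n)_{n\geq m}\big)$ lies in $\U$. A single application of strong summability yields only one finite-sums set in $\U$, and the tails of a subsequence of it need not survive in $\U$; this is exactly why strong summability (and not merely idempotency) is hypothesised, and why it must be fed back in at every stage. The verification that the tails stay in $\U$ rests on the sparseness enforced by (3): the inequality $|x_{n+1}|>k|s_n|$ with $k\geq 4$ guarantees that each element of $\FS\big((x_n)_{n\in\N}\big)$ has a unique representation as a sum of distinct $x_t$'s, and moreover that sums in which some generator occurs with multiplicity cannot coincide with genuine finite sums. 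With this uniqueness in hand one argues, exactly as in \cite[Lemma 12.20]{HindmanStrauss2011}, that if some tail failed to lie in $\U$ then its complement would meet a finite-sums set drawn from $\U$ in a set forcing a generator to appear with multiplicity two---a contradiction. An induction downward on $m$, using $\FS\big((x_n)_{n\geq m+1}\big)\subseteq\FS\big((x_n)_{n\geq m}\big)$ together with the $\star$-property $-x_m+B^{\star}\in\U$, then yields (2).
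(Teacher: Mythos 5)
Your high-level plan---adapt \cite[Lemma 12.20]{HindmanStrauss2011} with absolute values in place of order, idempotency supplied by Theorem \ref{theorem:ss-ultra-idempotent} since $\Z$ is a countable Abelian group, and an arbitrary $k\geq 4$ in place of $4$---is exactly the paper's proof (the paper says nothing more than this), and those three adaptations are indeed unproblematic. The problem is the construction you then sketch, which is not the construction behind \cite[Lemma 12.20]{HindmanStrauss2011} and cannot deliver property (2).

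Concretely: in a Galvin--Glazer-style recursion where $x_{n+1}$ is chosen one element at a time from a refinement of $B_n$ and $B_{n+1}$ is re-extracted inside $(-x_{n+1}+B_n)\cap B_n$, the bookkeeping yields only the inclusion $\FS\big((x_t)_{t>m}\big)\subseteq B_m$, i.e.\ the finite sums of each tail sit \emph{inside} a member of $\U$. To conclude $\FS\big((x_t)_{t>m}\big)\in\U$ you need the reverse containment: some member of $\U$ contained \emph{in} the tail's finite sums. The sets $B_m$ are finite-sums sets of entirely different sequences $(y^{(m)}_i)_i$, and nothing forces their elements to be finite sums of the not-yet-chosen $x_t$'s, so no member of $\U$ is ever exhibited inside any tail---not even for $m=1$. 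Your closing ``induction downward on $m$'' also runs the wrong way: the inclusion $\FS((x_n)_{n\geq m+1})\subseteq\FS((x_n)_{n\geq m})$ passes membership from $m+1$ to $m$, which is the trivial direction and has no base case, while the $\star$-property $-x_m+B^{\star}\in\U$ concerns translates of $B$, not tails of $\FS((x_n))$. This is precisely why the whole sequence must be produced by a \emph{single} application of strong summability (to $A$ intersected with a set chosen so that any finite-sums set inside it automatically satisfies the growth condition), so that $\FS((x_n)_{n\geq 1})\in\U$ holds from the outset; only then does the uniqueness-of-representations argument you allude to (a further application of strong summability inside a putative bad set, forcing a generator to occur with multiplicity two) show that every tail remains in $\U$. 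Your instinct that (2) is the delicate point is correct, but the one-element-at-a-time recursion you propose cannot be repaired to prove it.
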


Alongside other techniques, ultrafilters and their algebra have been useful in the study of partition regularity of equations and systems of equations since Galvin and Glazer's proof of Hindman Theorem; for classical results such as Schur's Theorem, van der Waerden's Theorem, Hindman's Theorem, the Central Set theorem, partition regularity of matrices, the Milliken-Taylor Theorem and Hales-Jewett Theorem, among others, see the monograph \cite{HindmanStrauss2011}. More recently, results for nonlinear equations have been proven also using ultrafilters in \cite{DiNasso2014,LuperiBagliniDiNasso2018,LuperiBaglini2021,LuperiBaglini2013}.

 Although the definition of partition regularity can be stated more generally, we restrict ourselves here to the partition regularity of linear Diophantine equations on $\N$ or $\Z$. As recalled in the introduction, the partition regularity of such equations has been characterised by Rado in \cite{Rado1933}. When $P\in\Z[x_1,\dots,x_m]$ is a Rado polynomial, the set of ultrafilters witnessing the partition regularity of the equation $P\left(x_{1},\dots,x_{m}\right)=0$ has several good algebraic properties. We summarise known facts and their respective references in the Theorem below, in which $K(\beta S)$ denotes the minimal bilateral ideal of $\beta S$.

\begin{thm}\label{theorem:facts_about_witnessing_pr}
	Let $S$ be either $\N$ or $\Z$, let $m\in\N$ and let $P\in\Z\left[x_{1},\dots,x_{m}\right]$ be a linear homogeneous polynomial such that $P\left(x_{1},\dots,x_{m}\right)=0$ is partition regular on $S$. Let $\U,\V\in\beta S$. Then
\begin{enumerate}
\item[(i)] the set $	\mathcal{I}_P = \{\U\in\beta S\mid  \U\models P\left(x_1,\dots,x_n\right)=0\}$ is a closed multiplicative bilateral ideal of $\beta S$;
	\item[(ii)] if $\U\models P\left(x_{1},\dots,x_{m}\right)=0$ and $\V\models P\left(x_{1},\dots,x_{m}\right)=0$, then for each $a,b\in\Z$, $a\U+ b\V\models P\left(x_{1},\dots,x_{m}\right)=0$;

	\item[(iii)] if $\U\models P\left(x_{1},\dots,x_{m}\right)=0$, then $ \U\cdot\V\models P\left(x_{1},\dots,x_{m}\right)=0$ and $ \V\cdot\U\models P\left(x_{1},\dots,x_{m}\right)=0$;

	\item[(iv)] if $\U\in\beta S$ is an additively minimal idempotent ultrafilter and $P$ is a Rado polynomial, then $\U\models P\left(x_{1},\dots,x_{m}\right)=0$;

	\item[(v)] if $\U\in \overline{K(\beta\Z,\cdot)}$, then $\U\models P\left(x_{1},\dots,x_{m}\right)=0$.
\end{enumerate}
\end{thm}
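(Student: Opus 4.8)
The plan is to treat the five clauses separately, proving the two combinatorial statements (ii) and (iii) directly from the definition of $\W\models P=0$, then reading off (i) and (v) as formal consequences, and finally importing (iv) from the theory of central sets. Throughout I write $P=\sum_{i=1}^{m}c_ix_i$ and recall from Theorem \ref{sempliciotto} that partition regularity of $P\left(x_1,\dots,x_m\right)=0$ already gives $\mathcal{I}_P\neq\emptyset$.

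For (ii) I would fix $\U,\V\models P=0$, scalars $a,b\in\Z$, and a set $A\in a\U+b\V$. Unwinding the definitions of scaling and addition in $\beta S$ shows that $C=\{u: \{v: au+bv\in A\}\in\V\}$ lies in $\U$. Since $\U\models P=0$, I pick $u_1,\dots,u_m\in C$ with $\sum_i c_iu_i=0$; then the finite intersection $D=\bigcap_{i=1}^{m}\{v: au_i+bv\in A\}$ belongs to $\V$, so there are $v_1,\dots,v_m\in D$ with $\sum_i c_iv_i=0$. Setting $y_i=au_i+bv_i\in A$, linearity forces $\sum_i c_iy_i=a\sum_i c_iu_i+b\sum_i c_iv_i=0$, so $a\U+b\V\models P=0$. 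The same finite-intersection idea proves (iii), now using homogeneity: for $\V\cdot\U$ a set $A\in\V\cdot\U$ furnishes some $x$ with $\{y: xy\in A\}\in\U$, and choosing $u_1,\dots,u_m$ there with $\sum_i c_iu_i=0$ gives $xu_i\in A$ and $\sum_i c_i(xu_i)=x\sum_i c_iu_i=0$; for $\U\cdot\V$ the set $E=\{x: \{y: xy\in A\}\in\V\}$ lies in $\U$, and after selecting $x_1,\dots,x_m\in E$ with $\sum_i c_ix_i=0$, any $y$ in the $\V$-set $\bigcap_i\{z: x_iz\in A\}$ yields $x_iy\in A$ with $\sum_i c_i(x_iy)=y\sum_i c_ix_i=0$.

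The remaining clauses follow quickly. Clause (iii) says exactly that $\mathcal{I}_P$ is a multiplicative bilateral ideal, which is the algebraic half of (i); for closedness, if $\W\notin\mathcal{I}_P$ there is $A\in\W$ containing no solution tuple, and then the basic clopen set $\{\W': A\in\W'\}$ is a neighbourhood of $\W$ disjoint from $\mathcal{I}_P$, so $\mathcal{I}_P$ is closed. For (v), since $K(\beta\Z,\cdot)$ is the smallest multiplicative two-sided ideal, (iii) together with $\mathcal{I}_P\neq\emptyset$ forces $K(\beta\Z,\cdot)\subseteq\mathcal{I}_P$, and closedness upgrades this to $\overline{K(\beta\Z,\cdot)}\subseteq\mathcal{I}_P$.

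The genuinely deep clause is (iv), and I expect it to be the main obstacle, since it is the one step not reducible to the elementary finite-intersection bookkeeping above. Here I would use that members of minimal additive idempotents are central sets and invoke the fact—a consequence of the Central Sets Theorem and its connection with Rado's columns condition, as recorded in \cite{HindmanStrauss2011}—that every central set contains solutions of every partition-regular homogeneous linear equation. This external input, rather than any direct ultrafilter computation, is what secures that every minimal idempotent witnesses a Rado equation.
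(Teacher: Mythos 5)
Your proposal is correct, and for clauses (i)--(iii) it takes a genuinely more self-contained route than the paper. The paper proves (ii) by citing Theorem 3 of \cite{LuperiBaglini2013} and (i) by citing the proof of Proposition 1.8 of \cite{LuperiBagliniDiNasso2018}, and then deduces (iii) from the ideal property in (i); you instead prove (ii) and (iii) directly from the definitions of $a\U+b\V$ and $\U\cdot\V$ via the standard finite-intersection argument (your identification of $A\in a\U+b\V$ with $\{u:\{v:au+bv\in A\}\in\V\}\in\U$ is the right one, and homogeneity of $P$ is used exactly where it must be), and then read (i) off from (iii) plus the clopen-basis argument for closedness. This reverses the paper's logical order between (i) and (iii), but both directions are sound, and your version has the advantage of not outsourcing the elementary computations; the paper's version is shorter on the page at the cost of two external references. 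For (v) the two arguments coincide. For (iv), both you and the paper rely on an external deep input: the paper cites Theorem 2 of \cite{BeiglbockBergelsonDownarowiczFish2009} (essential idempotents witness Rado systems, minimal idempotents being essential), whereas you route through the fact that members of minimal idempotents are central and that central sets contain solutions of every kernel partition regular linear system, which is Theorem 15.16 territory in \cite{HindmanStrauss2011}. Either citation legitimately closes the clause; yours is the more classical source, the paper's is the one that sets up its later discussion of essential idempotents. One small point worth making explicit in your write-up: the nonemptiness of $\mathcal{I}_P$ (from Theorem \ref{sempliciotto}) is genuinely needed for the ideal statement in (i) and hence for (v), and you do record it, so no gap there.
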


\begin{proof}
The proof of (i) can be found in the proof of Proposition 1.8 of \cite{LuperiBagliniDiNasso2018}. 

The proof of (ii) can be done using Theorem 3 of \cite{LuperiBaglini2013} and the fact that we are dealing with a linear homogeneous polynomial. 

The truth of (iii) can easily derived from (i): as $\mathcal{I}_P$ is a multiplicative bilateral ideal of $\beta S$ and $\U\in\mathcal{I}_P$ by hypothesis, for any $\V\in\beta S$, $U\cdot \V$ and $\V\cdot \U$ are both elements of $\mathcal{I}_P$. 

The proof of (iv) is a consequence of Theorem 2 of \cite{BeiglbockBergelsonDownarowiczFish2009}, in which the authors proved that any additively minimal idempotent element\footnote{If $S$ is either $\N$ or $\Z$, an additively idempotent ultrafilter $\U\in\beta S$ is essential if every $A\in\U$ has positive Banach density. In \cite[Theorem 2]{BeiglbockBergelsonDownarowiczFish2009}, the authors proved that any essential idempotent ultrafilter witnesses the partition regularity of any Rado system. As observed in the paragraph before \cite[Theorem 1.14]{BergelsonMcCutcheon2010}, every additively minimal idempotent ultrafilter is also an essential idempotent.} of $\beta S$ witnesses the partition regularity of any Rado equation.

To prove (v), let us observe that, as $\mathcal{I}_P$ is a multiplicative bilateral ideal of $\beta S$ and by the definition of the minimal bilateral ideal, $K(\beta S,\cdot)\subseteq\mathcal{I}_P$ and the fact that $\mathcal{I}_P$ is closed ensures that $\overline{K(\beta S,\cdot)}\subseteq \mathcal{I}_P$. As such, any element of $\overline{K(\beta S,\cdot)}$ witnesses the partition regularity of any Rado equation.
\end{proof}

Although we will use the above results only in the linear case, we recall here that Theorem \ref{theorem:facts_about_witnessing_pr} displayed a major importance in the study of the partition regularity of nonlinear polynomials in \cite{DiNasso2017, LuperiBagliniDiNasso2018,LuperiBaglini2021,LuperiBaglini2013}.

\section{String solutions to linear polynomials}\label{strings}

In this section we introduce basic results and notations about the relationships between strings of integers, ultrafilters and partition regularity problems. The relationship is well known in the context of Milliken-Taylor systems, as recalled in the Introduction. Here, however, we are interested in the relationships with single linear equations. We recall some basic facts from \cite{DiNasso2017}; however, whilst in that paper linear combinations of ultrafilters where studied from the point of view of nonstandard analysis, we will not use a nonstandard approach here. We denote by $\N_0$ the set of all non-negative integers, i.e. $\N_0 = \N\cup\{0\}$. In this section, consider $S$ to be either $\N_0$ or $\Z$.

 \begin{defn}
 	For each integer $k\geq 0$, let $S^k$ be the set of all $k$-tuples or $k$-strings of elements of $S$ (a $0$-tuple is the empty string). Denote by $S^{<\omega}$ the set of all finite strings of elements of $S$, i.e. $	S^{<\omega} = \bigcup_{k\geq 0} S^k.$
 \end{defn}

\begin{defn}
	Given a string $\sigma=(a_1,\dots,a_n)\in S^{<\omega}$ and an ultrafilter $\U\in\beta S$, we define the linear combination of $\U$ times $\sigma$ to be the ultrafilter 
	\begin{equation*}
	    \sigma\U = a_1\U+\dots+ a_n\U.
	\end{equation*}
\end{defn}

Different strings may generate the same ultrafilter: for example, if $\U\in\beta\Z$ is an additively idempotent ultrafilter, then $(1,1)\U=(1)\U$. This leads to introduce the following equivalence relation:

 \begin{defn}
Let $\approx$ be the smallest equivalence relation on $S^{<\omega}$ such that
	\begin{enumerate}
		\item the empty string $()$ is equivalent to $(0)$;
		\item for any $a\in S$, $(a)$ is equivalent to $(a,a)$; and
		\item $\approx$ is coherent with concatenation; i.e. if $\sigma\approx\sigma^{\prime}$ and $\tau\approx\tau^{\prime}$ then $\sigma\tau\approx \sigma^{\prime}\tau^{\prime}$.
	\end{enumerate}

If $\sigma\approx\tau$ we say that $\sigma$ is coherent with $\tau$. The equivalence class of $\sigma\in S^{<\omega}$ under the relation $\approx$ is denoted by $\mathfrak{G}(\sigma)$.
 \end{defn}

As an example, the string $(1,-2,3)\approx (0,1,1,-2,-0,-2,0,0,3,3,0,3)$. It is known that the equivalence relation $\approx$ characterises linear combinations $\sigma\U$ for $\sigma\in\N^{<\omega}$ and $\U\in\beta\Z$, in the following sense:

\begin{thm}\label{theorem:diNasso}
Let $\sigma,\tau\in\mathbb{Z}^{<\omega}$. The following facts are equivalent:
\begin{enumerate}
	\item\label{theorem:diNasso_1} $\sigma\approx\tau$;
	\item\label{theorem:diNasso_2} for every additively idempotent ultrafilter $\U\in\bZ$, $\sigma\U = \tau\U$; and
	\item\label{theorem:diNasso_3} for every additively idempotent ultrafilter $\U\in\bN$, $\sigma\U = \tau\U$.
\end{enumerate}
\end{thm}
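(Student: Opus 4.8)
# Proof Proposal for Theorem \ref{theorem:diNasso}

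The plan is to prove the cycle of implications $(\ref{theorem:diNasso_1}) \Rightarrow (\ref{theorem:diNasso_2}) \Rightarrow (\ref{theorem:diNasso_3}) \Rightarrow (\ref{theorem:diNasso_1})$. The implication $(\ref{theorem:diNasso_2}) \Rightarrow (\ref{theorem:diNasso_3})$ is immediate, since every additively idempotent $\U\in\bN$ is in particular an additively idempotent element of $\bZ$, so the universal statement over $\bZ$ specialises to the one over $\bN$. Thus the genuine work lies in establishing $(\ref{theorem:diNasso_1}) \Rightarrow (\ref{theorem:diNasso_2})$ and $(\ref{theorem:diNasso_3}) \Rightarrow (\ref{theorem:diNasso_1})$.

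For $(\ref{theorem:diNasso_1}) \Rightarrow (\ref{theorem:diNasso_2})$, I would first observe that for a fixed additively idempotent $\U\in\bZ$, the assignment $\sigma\mapsto\sigma\U$ is well-behaved with respect to the three generating relations defining $\approx$. Concretely, since $\U$ is idempotent we have $0\U = $ (the principal ultrafilter at $0$, which is the additive identity of $\bZ$), so appending a coordinate $0$ does not change the combination; this handles clause (1). Idempotency $\U+\U=\U$ gives $a\U + a\U = a(\U+\U)=a\U$, handling clause (2). Coherence with concatenation (clause 3) follows because $(\sigma\tau)\U = \sigma\U + \tau\U$ by definition, so if $\sigma\U=\sigma'\U$ and $\tau\U=\tau'\U$ then $(\sigma\tau)\U=(\sigma'\tau')\U$. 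Since $\approx$ is by definition the \emph{smallest} equivalence relation closed under these three operations, any relation that respects all three must be implied by it; hence $\sigma\approx\tau$ forces $\sigma\U=\tau\U$ for every additively idempotent $\U\in\bZ$. The only subtlety to check carefully is that each of these generating moves uses only idempotency (and the semigroup laws of $\bZ$), which is exactly the hypothesis on $\U$; I would make sure that associativity and the fact that scalar multiplication $a\U$ distributes correctly over the addition in $\bZ$ are invoked cleanly, as these are the structural facts that make the argument go through.

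The hard part will be the reverse direction $(\ref{theorem:diNasso_3}) \Rightarrow (\ref{theorem:diNasso_1})$: one must show that if two strings give the same ultrafilter for \emph{every} additively idempotent $\U\in\bN$, then they are already $\approx$-equivalent. The natural strategy is contrapositive combined with a normal-form argument. First I would identify a canonical representative of each $\approx$-class (for instance, a reduced string obtained by deleting zero coordinates and collapsing adjacent equal entries, mimicking the reduction rules), and argue that two strings are $\approx$-equivalent if and only if they share this normal form. Then, assuming $\sigma\not\approx\tau$, their normal forms differ, and the goal is to exhibit a single additively idempotent $\U\in\bN$ witnessing $\sigma\U\neq\tau\U$. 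This is where the theory of strongly summable ultrafilters enters: using Lemma \ref{lemma:ss_ultrafilter_main_lemma}, a strongly summable $\U$ admits finite-sum sets $\FS\big((x_n)_{n\in\N}\big)\in\U$ with the rapid-growth property $|x_{n+1}|>k|\sum_{t=1}^n x_t|$. This growth condition guarantees \emph{unique representation} of elements of the finite-sum set as signed sums, which lets one decode, from a typical element of a set in $\sigma\U$, the coefficient pattern of $\sigma$ itself. The plan is to show that distinct normal forms produce sets that can be separated by such a $\U$, so that membership in $\sigma\U$ detects the string $\sigma$ up to $\approx$.

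The main obstacle I anticipate is twofold. First, one must verify that the normal form genuinely characterises $\approx$-classes; this requires a confluence/termination argument for the reduction rules, which is routine in spirit but must be done carefully to ensure no two inequivalent strings collapse together. Second, and more delicate, is the decoding step: one must leverage the uniqueness-of-representation afforded by Lemma \ref{lemma:ss_ultrafilter_main_lemma} to reconstruct the multiset or sequence of coefficients from a generic sum, and confirm that differing normal forms yield genuinely different ultrafilters on $\bN$. Since strongly summable ultrafilters are additively idempotent by Theorem \ref{theorem:ss-ultra-idempotent}, any separating $\U$ produced this way is a legitimate witness, which keeps us inside the class quantified over in (\ref{theorem:diNasso_3}). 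I would expect the combinatorial heart of the proof to be precisely this reconstruction, where the growth inequality prevents cancellation ambiguities and thereby makes the coefficient pattern readable off the finite-sum structure.
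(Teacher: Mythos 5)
Your implications (\ref{theorem:diNasso_1})$\Rightarrow$(\ref{theorem:diNasso_2}) and (\ref{theorem:diNasso_2})$\Rightarrow$(\ref{theorem:diNasso_3}) are sound, and in fact more self-contained than the paper's treatment: the paper disposes of (\ref{theorem:diNasso_1})$\Leftrightarrow$(\ref{theorem:diNasso_3}) by citing \cite[Theorem 3.6]{DiNasso2014}, and of (\ref{theorem:diNasso_1})$\Leftrightarrow$(\ref{theorem:diNasso_2}) by citing \cite[Corollary 4.2]{HindmanLeaderStrauss2003} together with the observation that every additively idempotent $\U\in\bZ$ satisfies $\U\in\bN\setminus\N$ or $-\U\in\bN\setminus\N$. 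Your direct verification that $\sigma\mapsto\sigma\U$ respects the three generating relations of $\approx$ (using that $x\mapsto ax$ extends to an additive homomorphism of $\beta\Z$, that the principal ultrafilter at $0$ is the additive identity, and that $\U+\U=\U$), combined with the minimality of $\approx$, is a correct and complete argument for (\ref{theorem:diNasso_1})$\Rightarrow$(\ref{theorem:diNasso_2}).

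The genuine gap is in (\ref{theorem:diNasso_3})$\Rightarrow$(\ref{theorem:diNasso_1}), which you outline but do not prove. The two steps you defer are exactly the nontrivial content: (a) that every string has a unique reduced form characterising its $\approx$-class, and (b) that for distinct reduced strings $\sigma\neq\tau$ one can separate the Milliken--Taylor systems $\MT\big(\sigma,(x_n)_{n\in\N}\big)$ and $\MT\big(\tau,(x_n)_{n\in\N}\big)$ built from a sufficiently fast-growing sequence. Point (b) is delicate precisely because entries may be negative and of different magnitudes, so the growth rate must be calibrated against the coefficients of \emph{both} strings and the cancellation analysis is where the real work lies; this is the subject of \cite{HindmanLeaderStrauss2003}, and your sketch of ``decoding the coefficient pattern'' does not yet engage with it. Moreover, your choice of witness is foundationally problematic for an unconditional theorem: strongly summable ultrafilters need not exist in ZFC (as Section \ref{ultra algebra} recalls), so a separation argument whose witnessing idempotent comes only from Lemma \ref{lemma:ss_ultrafilter_main_lemma} establishes (\ref{theorem:diNasso_3})$\Rightarrow$(\ref{theorem:diNasso_1}) only under ESS. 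The repair is the one the paper uses for Theorem \ref{lingen} in Section \ref{FI}: fix the fast-growing sequence $(x_t)_{t\in\N}$ first, invoke Lemma \ref{lemma:sigma_p_ultra} to obtain in ZFC an idempotent $\U$ with $\FS\big((x_t)_{t\geq k}\big)\in\U$ for all $k$, and then apply Theorem \ref{theorem:mt-system-linear-com-idem} to place the two Milliken--Taylor systems in $\sigma\U$ and $\tau\U$ respectively, so that their disjointness yields $\sigma\U\neq\tau\U$.
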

\begin{proof}
The proofs of the equivalence (\ref{theorem:diNasso_1})$\Leftrightarrow$(\ref{theorem:diNasso_3}) can be found in \cite[Theorem 3.6]{DiNasso2014}. To prove the equivalence (\ref{theorem:diNasso_1})$\Leftrightarrow$(\ref{theorem:diNasso_2}), first note that any additively idempotent ultrafilter $\U\in\bZ$ satisfies $\U\in\bN\setminus\N$ or $-\U\in\bN\setminus\N$. Hence, the result follows applying \cite[Corollary 4.2]{HindmanLeaderStrauss2003}.
\end{proof}

From the above discussion, it has to be expected that properties of linear combinations $\sigma\U$ should correspond, somehow, with properties of $\sigma$.

We define operations between strings componentwise; when $c\in\Z$ and $\sigma=\left(a_{1},\dots,a_{n}\right)\in S^{<\omega}$, we let $c\sigma:=\left(ca_{1},\dots,ca_{n}\right)$; and, if $\tau=(b_1,\dots,b_n)\in S^{<\omega}$, then $\sigma + \tau := (a_1+b_1,\dots,a_n+b_n)$. Moreover, we let $0$ denote a string with all entries equal to $0$, independently of its length.

\begin{defn}
	A string $\left(a_1,\dots,a_n\right)$ in $\Z^{<\omega}$ is called reduced if
	\begin{enumerate}
		\item for every $i\in\{1,\dots,n-1\}$, $a_i\neq a_{i+1}$; and
		\item for every $i\in\{1,\dots,n\}$, $a_i\neq 0$.
	\end{enumerate}
\end{defn}
\begin{defn}

Let $\sigma\in \Z^{<\omega}$ and let $P\in\Z\left[x_{1},\dots,x_{m}\right]$ be a linear homogeneous polynomial with coefficients $c_1,\dots,c_m$. We say that the equation $P\left(x_1,\dots,x_m\right)=0$ has a solution in $\mathfrak{G}(\sigma)$ if there are strings $\sigma_1,\dots,\sigma_m\in\mathfrak{G}(\sigma)$ all of the same size such that $P\left(\sigma_1,\dots,\sigma_m\right)=\sum_{i=1}^{m}c_i \sigma_i = 0$. This solution will be called injective if the strings $\sigma_1,\dots,\sigma_m$ are mutually distinct.
\end{defn}

Note that, if $\sigma_1,\dots,\sigma_m\in\Z^{<\omega}$ have the same length and are, in this order, the columns of the matrix $M$, then, given $c_1,\dots,c_m\in\Z$, we have that 
\begin{equation*}
    M(c_1,\dots,c_m)^T = c_1\sigma_1+\dots+c_m\sigma_m.
\end{equation*}
Using this fact, one can easily prove the following:
\begin{lem}\label{lemma:string_solution_equiv}
Given $c_1,\dots,c_m\in\Z\setminus\{0\}$, let $P(x_{1},...,x_{m})=\sum\limits_{i=1}^{m}c_{i}x_{i}$ be a homogeneous linear polynomial and $\sigma\in \Z^{<\omega}$. The following conditions are equivalent:
\begin{enumerate}
	\item $P$ has a solution in $\mathfrak{G}(\sigma)$;
	\item there exists a matrix $M$ whose columns are coherent with $\sigma$ such that $M(c_{1},...,c_{m})^T=0$.
\end{enumerate}
Moreover, also the following conditions are equivalent:
\begin{enumerate}
	\item $P$ has an injective solution in $\mathfrak{G}(\sigma)$;
	\item there exists a matrix $M$ whose columns are coherent with $\sigma$ and mutually distinct such that $M(c_{1},...,c_{m})^T=0$.
	\end{enumerate}
\end{lem}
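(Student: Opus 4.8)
The plan is to treat both equivalences as direct translations between the language of string solutions and the language of matrices, using the identity $M(c_1,\dots,c_m)^T=c_1\sigma_1+\dots+c_m\sigma_m$ highlighted just before the statement. No genuinely new idea is required: the content is simply a careful matching of the two sets of definitions.

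For the first equivalence I would argue both directions through this dictionary. To pass from (1) to (2), given a solution $\sigma_1,\dots,\sigma_m\in\mathfrak{G}(\sigma)$ of a common size, I assemble the matrix $M$ having $\sigma_1,\dots,\sigma_m$ as its columns, in this order; this is well defined precisely because the strings share a common length, and each column is coherent with $\sigma$ by the definition of $\mathfrak{G}(\sigma)$. The identity then gives $M(c_1,\dots,c_m)^T=\sum_{i=1}^m c_i\sigma_i=0$, which is exactly (2). Conversely, starting from (2) I read the columns of $M$ off as strings $\sigma_1,\dots,\sigma_m$: being the columns of a single matrix they automatically have the same size, coherence with $\sigma$ says precisely that each $\sigma_i\in\mathfrak{G}(\sigma)$, and the identity turns the hypothesis $M(c_1,\dots,c_m)^T=0$ back into $\sum_{i=1}^m c_i\sigma_i=0$. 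Hence $P$ has a solution in $\mathfrak{G}(\sigma)$, giving (1).

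The injective version of the statement requires only one additional observation: by definition a solution is injective exactly when the strings $\sigma_1,\dots,\sigma_m$ are mutually distinct, and under the correspondence above this is literally the condition that the columns of $M$ be mutually distinct. The same two-way translation, carried out verbatim with the distinctness clause attached on both sides, therefore yields the second equivalence.

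In truth there is no hard step here. The only points deserving a moment's care are the two bookkeeping facts I have already isolated: that being of a common size is exactly what allows the $\sigma_i$ to form the columns of a single matrix, and that being coherent with $\sigma$ is by definition the same as belonging to $\mathfrak{G}(\sigma)$. Once these are noted, each implication reduces to a one-line application of the matrix--vector identity, which is why the lemma can fairly be called easy.
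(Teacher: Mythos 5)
Your proof is correct and is exactly the argument the paper has in mind: the lemma is stated immediately after the identity $M(c_1,\dots,c_m)^T=c_1\sigma_1+\dots+c_m\sigma_m$ with the remark that it follows easily from that fact, and your two-way translation (columns $\leftrightarrow$ strings, distinct columns $\leftrightarrow$ injective solution) is the intended bookkeeping.
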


An important fact that we will use about linear combinations of the form $\sigma\U$ is that they contain sets with a peculiar structure. If $n\geq 2$ and $F_1,\dots, F_n$ are finite sets of natural numbers, we write $F_1 <\dots < F_n$ whenever $\max F_i < \min F_{i+1}$, for every $i<n$.

\begin{defn}
Let $(x_n)_{n\in\N}$ be a sequence of integers and $\sigma=(a_1,\dots,a_n)$ be a reduced string of integers. We define the $(\sigma,(x_n)_{n\in\N})$-Milliken-Taylor system $\MT(\sigma,(x_n)_{n\in\N})$ to be the set
 	\begin{equation*}
		\left\{ \sum_{i=1}^{n}a_i \left(\sum_{j\in F_i}x_j\right)\mid  \forall i\leq n \ F_i\in\Pfin(\N) \text{ and } F_1< \dots < F_n  \right\}.
	\end{equation*}
\end{defn}

Special Milliken-Taylor systems will be used in the proof of our main result. Notice that the presence of Milliken-Taylor systems inside sets belonging to some linear combination of an ultrafilter is ensured by the following result\footnote{The result was originally proven for $\U\in\beta\N$, but it is immediate to generalise it to $\beta\Z$ by mapping $\U\in\beta\Z\setminus\beta\N$ to $-\U$.}:

\begin{thm}\cite[Theorem 17.32]{HindmanStrauss2011}\label{theorem:mt-system-linear-com-idem} Let $(x_n)_{n\in\N}$ be a sequence of integers and $\U\in\bZ$ be an additively idempotent ultrafilter such that, for all $m\in\N$, $\FS((x_n)_{n\geq m})\in\U$. Given any reduced string of integers $\sigma$, $\MT(\sigma,(x_n)_{n\in\N})\in\sigma\U$.
\end{thm}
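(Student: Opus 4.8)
The plan is to prove that for an additively idempotent ultrafilter $\U\in\bZ$ satisfying $\FS((x_n)_{n\geq m})\in\U$ for all $m\in\N$, and for a reduced string $\sigma=(a_1,\dots,a_n)$, the Milliken--Taylor system $\MT(\sigma,(x_n)_{n\in\N})$ belongs to $\sigma\U=a_1\U+\dots+a_n\U$. Since the paper cites \cite[Theorem 17.32]{HindmanStrauss2011} for this, I would in principle just invoke that reference; but to give a self-contained argument I would reconstruct the proof directly from the definition of the sum of ultrafilters. Recall that $A\in\V+\W$ iff $\{y : (A-y)\in\V\}\in\W$, equivalently iff $\{y : \{z : y+z\in A\}\in\V\}\in\W$. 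The membership $A\in a\U$ unwinds via $A\in a\U \iff \{k : ak\in A\}\in\U$, so I would need to chase these definitions carefully through the $n$-fold sum.

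First I would set up an induction on the length $n$ of the reduced string. The base case $n=1$ reduces to showing $\MT((a_1),(x_n)_{n})=\{a_1\sum_{j\in F}x_j : F\in\Pfin(\N)\}=a_1\cdot\FS((x_n)_n)\in a_1\U$, which follows immediately from $\FS((x_n)_n)\in\U$ and the definition of $a_1\U$. For the inductive step, the natural move is to peel off the first coefficient and write $\sigma\U=a_1\U+\tau\U$ where $\tau=(a_2,\dots,a_n)$. The key observation is a \emph{tail-shifting} phenomenon: for any fixed finite $F_1\in\Pfin(\N)$ with $\max F_1=M$, the set of completions of the partial sum $a_1(\sum_{j\in F_1}x_j)$ into a full Milliken--Taylor word is essentially $\MT(\tau,(x_n)_{n>M})$, which by the inductive hypothesis (applied to the tail sequence, whose finite sums still generate sets in $\U$ by the hypothesis $\FS((x_n)_{n\geq m})\in\U$ for all $m$) lies in $\tau\U$. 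The main technical content is verifying that the $F_1<F_2<\dots<F_n$ ordering condition translates correctly: fixing the block $F_1$ and requiring the remaining blocks to lie strictly above $\max F_1$ is exactly what lets the tail system $\MT(\tau,(x_n)_{n>\max F_1})$ appear.

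Concretely, to show $\MT(\sigma,(x_n)_n)\in a_1\U+\tau\U$, I must verify that
\begin{equation*}
\Big\{y : \{z : y+z\in \MT(\sigma,(x_n)_n)\}\in\tau\U\Big\}\in a_1\U.
\end{equation*}
I would show this set contains $a_1\cdot\FS((x_n)_n)$, which is in $a_1\U$. So fix $y=a_1(\sum_{j\in F_1}x_j)$ and let $M=\max F_1$; then $\{z : y+z\in\MT(\sigma,(x_n)_n)\}\supseteq\MT(\tau,(x_n)_{n>M})$, because any $z=\sum_{i=2}^{n}a_i(\sum_{j\in F_i}x_j)$ with $M<F_2<\dots<F_n$ yields $y+z\in\MT(\sigma,(x_n)_n)$ via the admissible block sequence $F_1<F_2<\dots<F_n$. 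Since $\FS((x_n)_{n>M})\in\U$, the inductive hypothesis gives $\MT(\tau,(x_n)_{n>M})\in\tau\U$, and hence the inner set is in $\tau\U$, as required.

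The main obstacle I anticipate is bookkeeping rather than conceptual difficulty: one must be careful that the reducedness of $\sigma$ is used exactly where needed (so that the MT system is genuinely of the stated form and the tail string $\tau$ is still reduced, or at least that the argument survives), and that the tail sequence $(x_n)_{n>M}$ still satisfies the hypothesis of Theorem~\ref{theorem:mt-system-linear-com-idem} — which it does, precisely because we assumed $\FS((x_n)_{n\geq m})\in\U$ for \emph{all} $m$, not just $m=1$. A subtle point is whether peeling off $a_1$ interacts well with the idempotency of $\U$ when $a_1$ and $a_2$ could combine; here reducedness ($a_i\neq a_{i+1}$) guards against degeneracies, though for the pure membership statement the order condition $F_1<\dots<F_n$ keeps the blocks disjoint and the argument goes through cleanly. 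Finally, I would note the $\beta\Z$ versus $\beta\N$ issue flagged in the paper's footnote is handled by the map $\U\mapsto-\U$, so no separate treatment of negative-valued ultrafilters is needed.
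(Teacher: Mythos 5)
The paper offers no proof of this statement: it is imported verbatim as \cite[Theorem 17.32]{HindmanStrauss2011}, with only a footnote remarking that the $\beta\N$ version transfers to $\beta\Z$ via $\U\mapsto-\U$. Your reconstruction is correct and is essentially the standard proof of the cited theorem: induction on the length of $\sigma$, peeling off $a_1$, showing the outer set $\{y:-y+\MT(\sigma,(x_t)_t)\in\tau\U\}$ contains $a_1\cdot\FS((x_t)_t)$, and using the tail hypothesis $\FS((x_t)_{t\geq m})\in\U$ for all $m$ to feed the inductive hypothesis for $\MT(\tau,(x_t)_{t>\max F_1})$. One genuine merit of your direct argument is that it works in $\beta\Z$ with no positivity assumption, so the paper's footnoted reduction to $\beta\N$ becomes unnecessary; you also correctly observe that reducedness and even idempotency of $\U$ play no role in this particular membership claim. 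The only defect is a notational one: you first ``recall'' that $A\in\V+\W$ iff $\{y:\{z:y+z\in A\}\in\V\}\in\W$ (outer quantifier on the \emph{second} summand), but then apply the opposite, correct Hindman--Strauss convention $\{y:\{z:y+z\in A\}\in\tau\U\}\in a_1\U$ (outer quantifier on the \emph{first} summand). Your application is the right one and the argument goes through with it; just fix the stated definition so the two agree, since under the convention you actually wrote down you would instead have to peel off $a_n$ from the right.
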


\section{Polynomials solved by linear combinations of ultrafilters}\label{section:Polynomials_solved_by_lin_comb_idemp}

In the previous sections, we recalled (almost) all results that we need to study our main problem, namely: given a reduced string $\sigma\in\Z^{<\omega}$ and an idempotent $\U\in\beta\Z$, for which kind of linear homogeneous polynomials $P\in\Z\left[x_{1},\dots,x_{m}\right]$ does $\sigma\U$ witness the partition regularity of $P\left(x_1,\dots,x_m\right)=0$?

When $\U$ is a minimal idempotent, the short answer is: always. Actually, in this case we have an even more general result:

\begin{thm}
	Let $\U_1,\dots,\U_n\in\bZ$ be additively minimal idempotent ultrafilters, $\left(a_1,\dots,a_n\right)$ a reduced string of integers and $P\in\Z\left[x_1,\dots,x_m\right]$ a Rado polynomial, then $a_1\U_1+\dots+ a_n\U_n$ witnesses the partition regularity of the equation $P\left(x_1,\dots,x_m\right)= 0$
\end{thm}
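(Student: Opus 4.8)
The plan is to reduce the claim about the linear combination $a_1\U_1+\dots+ a_n\U_n$ to the already-established fact, recorded in Theorem~\ref{theorem:facts_about_witnessing_pr}(iv), that a single additively minimal idempotent ultrafilter witnesses the partition regularity of every Rado equation. The key observation is that parts (ii) and (iv) of Theorem~\ref{theorem:facts_about_witnessing_pr} almost immediately combine to give the result, once we check that each $\U_i$ individually witnesses $P=0$ and that the hypotheses of (ii) extend from two summands to $n$ summands by a routine induction.

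First I would note that, since $P$ is a Rado polynomial and each $\U_i$ is an additively minimal idempotent, Theorem~\ref{theorem:facts_about_witnessing_pr}(iv) yields $\U_i\models P(x_1,\dots,x_m)=0$ for every $i\in\{1,\dots,n\}$. Next I would invoke Theorem~\ref{theorem:facts_about_witnessing_pr}(ii): if $\V\models P=0$ and $\W\models P=0$, then $a\V+ b\W\models P=0$ for all $a,b\in\Z$. I would then set up an induction on $n$. For $n=1$ the statement is exactly (iv) (taking $a_1\U_1=a_1\U_1+0$, or directly observing that multiplication by the scalar $a_1$ preserves membership in $\mathcal{I}_P$ via the bilateral-ideal property of part (i)). For the inductive step, assume that $\V := a_1\U_1+\dots+ a_{n-1}\U_{n-1}$ satisfies $\V\models P=0$; since also $\U_n\models P=0$, applying (ii) with the scalars $1$ and $a_n$ gives $\V+ a_n\U_n = a_1\U_1+\dots+ a_n\U_n\models P=0$, completing the induction.

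A small point to verify carefully is that part (ii) is stated for a \emph{partition regular} $P$, and indeed a Rado polynomial gives a partition regular equation by Rado's Theorem, so the hypotheses of Theorem~\ref{theorem:facts_about_witnessing_pr} are met throughout. Likewise, the set $\mathcal{I}_P$ from part (i) is a closed multiplicative bilateral ideal, which guarantees it is nonempty and that the inductive combinations stay inside it; the additive closure needed here is precisely what (ii) supplies. The reducedness of the string $(a_1,\dots,a_n)$ plays no essential role in the argument, since the conclusion only concerns which equation the resulting ultrafilter solves, not the distinctness of the combination; it is a hypothesis inherited from the surrounding framework rather than something the proof exploits.

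I do not expect a serious obstacle here: the theorem is essentially a bookkeeping consequence of the algebraic closure properties already assembled in Theorem~\ref{theorem:facts_about_witnessing_pr}. The only mild subtlety is making sure the two-summand statement (ii) is correctly iterated to $n$ summands, keeping track that at each stage the partial sum is itself an ultrafilter witnessing $P=0$ so that (ii) applies again; this is handled cleanly by the induction sketched above. If anything requires care, it is confirming that multiplication by an integer scalar $a_i$ (including negative scalars, since the $\U_i\in\bZ$) preserves the witnessing property, but this too is covered by the bilateral-ideal structure of $\mathcal{I}_P$ together with (ii).
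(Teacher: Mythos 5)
Your argument is correct and is essentially identical to the paper's own proof, which likewise applies Theorem~\ref{theorem:facts_about_witnessing_pr}(iv) to each $\U_i$ and then combines the summands by an inductive use of Theorem~\ref{theorem:facts_about_witnessing_pr}(ii). Your additional remarks on the base case, the scalar multiples, and the irrelevance of reducedness are sensible elaborations of details the paper leaves implicit.
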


\begin{proof}
	By the item (iv) of the Theorem \ref{theorem:facts_about_witnessing_pr}, any additively minimal idempotent of $\bN$ witnesses the partition regularity of the equation $P\left(x_1,\dots,x_m\right)=0$; combining this fact with an inductive use of the item (ii) of the Lemma \ref{theorem:facts_about_witnessing_pr}, we conclude our proof.\end{proof}

From the above result we see that for any Rado polynomial $P\in\Z[x_1,\dots,x_m]$ and any additivelly minimal idempotent $\U\in\bZ$, all linear combinations of $\U$ witness the partition regularity of the equation $P\left(x_1,\dots,x_m\right)=0$. It turns out that the additivelly minimal idempotents ultrafilters are not the only class of ultrafilters with this property.

\begin{prop}
	 There exists an additivelly idempotent ultrafilter $\U\in\bZ$ such that
	 \begin{enumerate}
	 	\item $\U$ is not minimal;
	 	\item for all $m\geq 2$, for all Rado polynomial $P\in\Z[x_1,\dots,x_m]$ and for all reduced string $\sigma\in\Z^{<\omega}$, $\sigma\U\models P\left(x_1,\dots,x_m\right)=0$.
	 \end{enumerate}
\end{prop}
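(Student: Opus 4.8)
The plan is to produce a witness by exploiting the one class of idempotents we know contains all Rado witnesses while sitting \emph{outside} the minimal ideal: the closure of the minimal multiplicative ideal $\overline{K(\beta\Z,\cdot)}$. By item (v) of Theorem \ref{theorem:facts_about_witnessing_pr}, every ultrafilter in $\overline{K(\beta\Z,\cdot)}$ witnesses the partition regularity of every Rado equation. So first I would try to locate an additively idempotent ultrafilter $\U$ that is additively non-minimal but lies in $\overline{K(\beta\Z,\cdot)}$. If such a $\U$ exists, then each coefficient ultrafilter of the linear combination can be handled separately: for a reduced string $\sigma=(a_1,\dots,a_n)$, I would argue that $\sigma\U = a_1\U+\dots+a_n\U$ is again a witness. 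Indeed, $\U\models P=0$ gives $a_i\U\models P=0$ when $P$ is homogeneous (scaling a solution by $a_i$ stays a solution), and then an inductive application of item (ii) of Theorem \ref{theorem:facts_about_witnessing_pr}, which closes the witness property under sums $a\U+b\V$, yields $\sigma\U\models P=0$.

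Alternatively, and perhaps more robustly, I would build $\U$ directly from the rich supply of witnesses and avoid pinning down its precise position in the minimal ideal. Let $\mathcal{R}$ be the set of all additively idempotent ultrafilters $\V\in\beta\Z$ such that $\sigma\V\models P=0$ for every $m\geq 2$, every Rado $P\in\Z[x_1,\dots,x_m]$ and every reduced $\sigma$. By the two theorems above, $\overline{K(\beta\Z,\cdot)}$ consists entirely of witnesses, and it contains additive idempotents (the minimal additive idempotents among them). The goal is then to exhibit one element of $\mathcal{R}$ that is \emph{not} additively minimal. Here I would use the fact that $\mathcal{R}$ is large: the set $\mathcal{I}_P$ of witnesses of a single $P$ is a closed two-sided multiplicative ideal, so intersecting over the countably many (up to scaling) relevant equations and over strings still leaves a closed set containing $\overline{K(\beta\Z,\cdot)}$. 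Since $\overline{K(\beta\Z,\cdot)}$ properly contains $K(\beta\Z,\cdot)$ and is much larger than the set of additively minimal idempotents, a counting or topological argument should produce a non-minimal additive idempotent inside it.

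The key technical point is separating \emph{additive} minimality from membership in the \emph{multiplicative} minimal ideal's closure. An additively idempotent ultrafilter need not be additively minimal even when it lies in $\overline{K(\beta\Z,\cdot)}$; these are two different algebraic structures on $\beta\Z$, and there is no reason the multiplicative structure should force additive minimality. Concretely, I would recall that additively minimal idempotents form a set whose additive closure is the additive minimal ideal $K(\beta\Z,+)$, whereas $\overline{K(\beta\Z,\cdot)}$ is governed by the multiplicative operation; one can then find an additive idempotent in $\overline{K(\beta\Z,\cdot)}$ that fails the characterisation of additive minimality (e.g. it does not belong to every additively left ideal, equivalently there is $A\in\U$ that is not additively central or syndetic in the appropriate sense).

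The hard part will be verifying non-minimality of the witness I construct: producing \emph{some} idempotent in $\overline{K(\beta\Z,\cdot)}$ is easy, but certifying that it is not additively minimal requires an explicit combinatorial obstruction, typically a set $A\in\U$ witnessing that $\U$ lies outside $K(\beta\Z,+)$. I expect to handle this by choosing $\U$ in the multiplicative minimal ideal's closure but arranging, via a standard cardinality or genericity argument on $\beta\Z$, that $\U$ avoids the (relatively small) collection of additively minimal idempotents; the general abundance of idempotents in any closed subsemigroup (Ellis's lemma applied to the additive semigroup structure restricted to $\overline{K(\beta\Z,\cdot)}$) should guarantee that not all of them are additively minimal.
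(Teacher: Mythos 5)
There is a genuine gap, and it sits exactly at the heart of the proposition: you never actually produce the additively idempotent, additively non-minimal ultrafilter with the universal witnessing property. Your reduction of the string part is fine (once a single witness $\U$ is in hand, an inductive use of item (ii) of Theorem \ref{theorem:facts_about_witnessing_pr} gives $\sigma\U\models P=0$ for every reduced $\sigma$; this is exactly what the paper does too). But your candidate source of witnesses, $\overline{K(\beta\Z,\cdot)}$, does not obviously contain \emph{any} additive idempotent, let alone a non-minimal one. The structural facts you lean on are either unproved or dubious: that ``$\overline{K(\beta\Z,\cdot)}$ contains the minimal additive idempotents'' is not a known inclusion (the known result in Hindman--Strauss goes the other way around, namely that $\overline{K(\beta\N,+)}$ is an ideal of $(\beta\N,\cdot)$, hence $K(\beta\N,\cdot)\subseteq\overline{K(\beta\N,+)}$); applying Ellis's lemma ``to the additive semigroup structure restricted to $\overline{K(\beta\Z,\cdot)}$'' requires $\overline{K(\beta\Z,\cdot)}$ to be an additive subsemigroup, which you do not establish and which is not a standard fact; and the concluding ``counting or topological argument'' for non-minimality is a placeholder, not an argument. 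Being a large closed set containing few additively minimal idempotents does not by itself yield a non-minimal additive idempotent inside it.

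The paper closes this gap by citing a different, and much more targeted, pair of results: by Beiglb\"ock--Bergelson--Downarowicz--Fish, every \emph{essential} idempotent (an additive idempotent all of whose members have positive Banach density) witnesses every Rado system, so by item (ii) all its linear combinations $\sigma\U$ do as well; and by Bergelson--McCutcheon, the class of essential idempotents is \emph{strictly} larger than the class of minimal idempotents, which hands you the required non-minimal witness outright. If you want to salvage your approach, you would need to replace $\overline{K(\beta\Z,\cdot)}$ by a class for which both the witnessing property and the existence of a non-minimal additive idempotent are actually known; the essential idempotents are precisely such a class.
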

\begin{proof}
	From Theorem 2 of \cite{BeiglbockBergelsonDownarowiczFish2009} and item (ii) of the Theorem \ref{theorem:facts_about_witnessing_pr}, we know that any linear combination of an essential idempotent solves any Rado polynomial; in \cite[Theorem 1.14]{BergelsonMcCutcheon2010}, the authors proved that the class of all essential idempotent ultrafilters is strictly larger than the class of all minimal idempotents.
\end{proof}

Now, let ESS be the statement \emph{"there exists a strongly summable ultrafilter on $\Z$"}. Our main result is:

\begin{thm}\label{lingen}(ZFC+ESS) Let $P\in\Z[x_1,\dots,x_m]$ be a linear homogeneous polynomial and $\sigma\in\Z^{<\omega}$ be a reduced string. The following facts are equivalent:
	\begin{enumerate}
		\item\label{lingen1} For all idempotent ultrafilters $\U\in\bZ$ we have that
		\begin{equation*}
			\sigma\U\models P\left(x_1,\dots,x_m\right)= 0;
		\end{equation*}
		\item\label{lingen2} There exists a strongly summable ultrafilter $\U\in\bZ$ such that
		\begin{equation*}
			\sigma\U\models P\left(x_1,\dots,x_m\right) = 0;
		\end{equation*}
		\item\label{lingen3} $P$ has a solution in $\mathfrak{G}(\sigma)$.
	\end{enumerate}
\end{thm}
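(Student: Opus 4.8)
The plan is to prove the cycle of implications $(\ref{lingen3})\Rightarrow(\ref{lingen1})\Rightarrow(\ref{lingen2})\Rightarrow(\ref{lingen3})$, since $(\ref{lingen1})\Rightarrow(\ref{lingen2})$ is immediate under ESS (a strongly summable ultrafilter exists and is idempotent by Theorem~\ref{theorem:ss-ultra-idempotent}, so it is among the $\U$ quantified in (\ref{lingen1})). Thus the real content lies in the two implications $(\ref{lingen3})\Rightarrow(\ref{lingen1})$ and $(\ref{lingen2})\Rightarrow(\ref{lingen3})$.

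For $(\ref{lingen3})\Rightarrow(\ref{lingen1})$, I would argue directly from the definition of witnessing. Suppose $P$ has a solution in $\mathfrak{G}(\sigma)$, so by Lemma~\ref{lemma:string_solution_equiv} there is a matrix $M$ whose columns $\sigma_1,\dots,\sigma_m$ are all coherent with $\sigma$ and satisfy $c_1\sigma_1+\dots+c_m\sigma_m=0$. Fix any additively idempotent $\U\in\bZ$ and any $A\in\sigma\U$. Since each $\sigma_j\approx\sigma$, Theorem~\ref{theorem:diNasso} gives $\sigma_j\U=\sigma\U$, so $A\in\sigma_j\U$ for every $j$. The goal is to extract from $A$ actual integers $a_1,\dots,a_m$ with $P(a_1,\dots,a_m)=0$. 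The natural device is to realise each $\sigma_j\U$ as $\sigma_j$ applied to a common summation structure: using Theorem~\ref{theorem:mt-system-linear-com-idem}, pick a sequence $(x_n)$ with all tails in $\U$ so that the relevant Milliken--Taylor systems $\MT(\sigma_j,(x_n))$ lie in $\sigma_j\U=\sigma\U$. Intersecting these finitely many sets with $A$ inside $\sigma\U$, one finds a single choice of finite blocks $F_1<\dots<F_\ell$ (reading off the length $\ell$ of the coherent strings) producing values $a_j=\sum_i (\sigma_j)_i\,y_i$, where $y_i=\sum_{n\in F_i}x_n$; because the columns satisfy $\sum_j c_j\sigma_j=0$ componentwise, the corresponding linear combination $\sum_j c_j a_j=\sum_i\big(\sum_j c_j(\sigma_j)_i\big)y_i$ collapses to $0$. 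The one subtlety here is aligning the block decompositions so that all $m$ values are read off the same blocks $y_i$; I expect this to be the routine-but-delicate bookkeeping step.

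For $(\ref{lingen2})\Rightarrow(\ref{lingen3})$, the plan is the contrapositive flavoured argument that exploits the strong summability. Assume $\U$ is strongly summable and $\sigma\U\models P=0$; I want to produce a string solution in $\mathfrak{G}(\sigma)$. The idea is to take a witnessing set $A\in\sigma\U$ of very controlled shape: using Lemma~\ref{lemma:ss_ultrafilter_main_lemma} (with a suitably large gap constant $k$, larger than the total weight $\sum_i|a_i|$ of $\sigma$ times anything relevant), choose a sequence $(x_n)$ with $\FS((x_n))\in\U$ whose terms grow so fast that each finite sum has a \emph{unique} representation. Then $\MT(\sigma,(x_n))\in\sigma\U$ by Theorem~\ref{theorem:mt-system-linear-com-idem}, and $A:=\MT(\sigma,(x_n))$ is a legitimate set in $\sigma\U$. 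Since $\sigma\U$ witnesses $P=0$, there exist $a_1,\dots,a_m\in A$ with $\sum_j c_j a_j=0$; each $a_j$ has the form $\sum_{i=1}^{n} (\sigma)_i\,(\sum_{t\in F_i^{(j)}}x_t)$ for some increasing blocks. The rapid growth from Lemma~\ref{lemma:ss_ultrafilter_main_lemma}(3) should force these representations to be essentially unique, so that the relation $\sum_j c_j a_j=0$ \emph{reads off} as an identity among the coefficient patterns: collecting the coefficient of each $x_t$ gives exactly a linear relation $\sum_j c_j \sigma_j=0$ among strings $\sigma_j\approx\sigma$ obtained by recording which entry of $\sigma$ each block index $t$ belongs to. By Lemma~\ref{lemma:string_solution_equiv} this is precisely a solution of $P$ in $\mathfrak{G}(\sigma)$.

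The hard part will be this last extraction: turning an additive relation among the integers $a_j$ into a \emph{componentwise} string identity. The growth condition in Lemma~\ref{lemma:ss_ultrafilter_main_lemma}(3) is the crucial lever, guaranteeing that the ``digits'' $x_t$ do not interfere, so that cancellation in $\sum_j c_j a_j=0$ can only happen coordinate-by-coordinate rather than through large carries across different blocks. I would handle this by a descending induction on the largest index $t$ appearing in any of the $F_i^{(j)}$, peeling off the top term: its coefficient must vanish, which pins down how the top block is distributed among the strings and lets the induction proceed on the truncated sums. The strong summability (as opposed to mere idempotency) is exactly what lets us \emph{choose} the rapidly growing witness sequence rather than merely knowing one exists, and this is why ESS enters the statement; the ZFC-free variant in Section~\ref{FI} presumably replaces this with the finite-intersection trick. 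Care is needed to ensure the resulting strings $\sigma_j$ are all of the same length (pad with the $0\approx()$ convention) and remain coherent with $\sigma$, but these are exactly the operations sanctioned by the relation $\approx$.
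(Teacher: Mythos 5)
Your treatment of $(\ref{lingen2})\Rightarrow(\ref{lingen3})$ is essentially the paper's own proof of Lemma \ref{lemma:lingen_lemma_2}: choose the rapidly growing sequence via Lemma \ref{lemma:ss_ultrafilter_main_lemma}, put the Milliken--Taylor system $\MT(\sigma,(x_n))$ into $\sigma\U$ via Theorem \ref{theorem:mt-system-linear-com-idem}, and use the growth condition to show that in $\sum_j c_j a_j=0$ the coefficient of each $x_t$ must vanish separately (the paper does your ``descending induction'' by taking the largest index with nonvanishing coefficient and deriving a contradiction), then read off the coherent matrix and apply Lemma \ref{lemma:string_solution_equiv}. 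The cycle $(\ref{lingen3})\Rightarrow(\ref{lingen1})\Rightarrow(\ref{lingen2})\Rightarrow(\ref{lingen3})$ and the observation that $(\ref{lingen1})\Rightarrow(\ref{lingen2})$ is immediate under ESS via Theorem \ref{theorem:ss-ultra-idempotent} also match the paper.

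The gap is in your sketch of $(\ref{lingen3})\Rightarrow(\ref{lingen1})$, which the paper does not prove from scratch but imports as Lemma \ref{lemma:lingen_lemma_1} (Di Nasso's result, a special case of Luperi Baglini's tensor-product theorem). Your argument invokes Theorem \ref{theorem:mt-system-linear-com-idem} for an \emph{arbitrary} idempotent $\U\in\bZ$, but that theorem has the joint hypothesis that there is a sequence $(x_n)$ with $\FS\big((x_n)_{n\geq m}\big)\in\U$ for every $m$; for a general idempotent no such sequence need exist (every member of an idempotent \emph{contains} an $\FS$-set, but the $\FS$-set itself need not belong to the ultrafilter --- securing this is precisely what strong summability, via Lemma \ref{lemma:ss_ultrafilter_main_lemma}, buys you in the other implication). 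Moreover, even granting such a sequence, intersecting the finitely many sets $\MT(\sigma_j,(x_n))$ only yields integers admitting \emph{some} representation in each system, with no guarantee that the block decompositions agree across $j$; your ``routine-but-delicate bookkeeping'' is exactly the point where this approach breaks. The correct direct argument avoids Milliken--Taylor systems entirely: unfold the definition of $A\in a_1\U+\dots+ a_k\U$ as an iterated statement of the form $\{t_1:\{t_2:\cdots\}\in\U\}\in\U$, and at each of the $k$ levels intersect the finitely many $\U$-large sets arising from the $m$ columns $\sigma_1,\dots,\sigma_m$; this produces a single tuple $(t_1,\dots,t_k)$ with $y_j=\sum_i(\sigma_j)_i t_i\in A$ for all $j$, whence $\sum_j c_j y_j=\sum_i\big(\sum_j c_j(\sigma_j)_i\big)t_i=0$ because the matrix identity holds componentwise. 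With that repair (or simply citing Lemma \ref{lemma:lingen_lemma_1}) your proof is complete.
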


Although the proof of the equivalence between (\ref{lingen1}) and (\ref{lingen3}) in Theorem \ref{lingen} can be done entirely within ZFC, we first present our proof using strongly summable ultrafilters because they naturally have the string-like structure that we will use in the proof.

The first result we need to recall is the following Theorem\footnote{Di Nasso proved, with nonstandard methods, this result for $\U\in\bN$ and $\sigma\in\N^{<\omega}$; however, its proof generalises in a straightforward way to $\beta\Z$ and $\Z^{<\omega}$.} from \cite{DiNasso2014}, which is a particular case of \cite[Theorem 5.4]{LuperiBaglini2018}, that shows the implication $(3)\Rightarrow (1)$ in Theorem \ref{lingen}.

\begin{lem}\label{lemma:lingen_lemma_1}
Let $\U\in\bZ$ be an additively idempotent ultrafilter, $\sigma\in\Z^{<\omega}$ a reduced string and $P\in\Z[x_1,\dots,x_m]$ be a linear homogeneous polynomial that has a solution in $\mathfrak{G}(\sigma)$. Then $\sigma\U\models P\left(x_1,\dots,x_m\right)=0$.
\end{lem}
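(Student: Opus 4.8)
The plan is to unwind the definition of witnessing and reduce everything to the following task: for an arbitrary $A\in\sigma\U$, produce a single tuple of elements of $A$ on which $P$ vanishes. So fix $A\in\sigma\U$ and let $\sigma_1,\dots,\sigma_m\in\mathfrak{G}(\sigma)$ be strings of a common length $\ell$ realising the solution of $P$ in $\mathfrak{G}(\sigma)$. Writing $\sigma_i=(b_{i,1},\dots,b_{i,\ell})$, the identity $\sum_{i=1}^m c_i\sigma_i=0$ holds componentwise; equivalently (Lemma \ref{lemma:string_solution_equiv}) the matrix $M$ with columns $\sigma_1,\dots,\sigma_m$ satisfies $M(c_1,\dots,c_m)^T=0$, that is $\sum_{i=1}^m c_i b_{i,j}=0$ for every $j\le\ell$. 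This column relation is the only algebraic input I will need, and it is what ultimately forces $P$ to vanish on the witnesses.

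The heart of the argument is to fit a whole Milliken--Taylor system inside $A$. Since $\U$ is idempotent and $A\in\sigma\U$, I would produce an injective sequence $(x_t)_{t\in\N}$ in $\Z$ with $\FS\big((x_t)_{t\ge p}\big)\in\U$ for all $p$ and, crucially, with $\MT(\sigma,(x_t)_{t\in\N})\subseteq A$. Granting this, I define the candidate witnesses by evaluating each column on the first $\ell$ terms of the sequence,
\begin{equation*}
a_i:=\sum_{j=1}^{\ell} b_{i,j}\,x_j\qquad(i=1,\dots,m).
\end{equation*}

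Two checks then finish the proof. First, that $a_i\in A$ for every $i$: here I use that $\sigma_i\approx\sigma$, since $\sigma_i\in\mathfrak{G}(\sigma)$. Collapsing $\sigma_i$ to its reduced form $\sigma=(a_1,\dots,a_n)$ amounts to deleting its zero entries and merging maximal runs of equal consecutive entries, so the positions $\{1,\dots,\ell\}$ split into the zero-positions (which contribute $0\cdot x_j$) and $n$ nonempty consecutive blocks $J_1<\dots<J_n$, where $J_r$ collects the positions carrying the value $a_r$. Hence $a_i=\sum_{r=1}^n a_r\big(\sum_{j\in J_r}x_j\big)$ with $J_1<\dots<J_n$, which exhibits $a_i$ as an element of $\MT(\sigma,(x_t)_{t\in\N})\subseteq A$. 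Second, that $P(a_1,\dots,a_m)=0$: swapping the order of summation,
\begin{equation*}
\sum_{i=1}^m c_i a_i=\sum_{j=1}^{\ell}\Big(\sum_{i=1}^m c_i b_{i,j}\Big)x_j=0,
\end{equation*}
by the column relation. As $A\in\sigma\U$ was arbitrary, this gives $\sigma\U\models P(x_1,\dots,x_m)=0$.

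The main obstacle is the single step of placing the \emph{entire} system $\MT(\sigma,(x_t)_{t\in\N})$ inside the prescribed $A$, rather than merely knowing it is $\sigma\U$-large: Theorem \ref{theorem:mt-system-linear-com-idem} as stated yields membership in $\sigma\U$ for a \emph{given} sequence, whereas I need the containment form. I would obtain it either by appealing directly to the Milliken--Taylor theory of \cite{HindmanStrauss2011}, or by the usual recursive Galvin--Glazer selection, choosing each $x_t$ inside carefully chosen members of $\U$ so that every admissible weighted block sum remains in $A$. The remaining work---the run/zero bookkeeping that identifies each $a_i$ as an element of the system---is routine once the relation $\approx$ is spelled out, and the vanishing of $P$ on the $a_i$ is then automatic.
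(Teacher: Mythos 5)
Your argument is correct, but it is worth noting that the paper does not actually prove Lemma \ref{lemma:lingen_lemma_1}: it is quoted from \cite{DiNasso2014} (where it is obtained by nonstandard methods) as a special case of \cite[Theorem 5.4]{LuperiBaglini2018}. What you have written is the natural direct, standard proof, and it is the mirror image of the technique the paper does spell out for the converse direction in Lemma \ref{lemma:lingen_lemma_2}: there one uses that $\MT(\sigma,(x_t))$ is \emph{a member} of $\sigma\U$ (Theorem \ref{theorem:mt-system-linear-com-idem}, i.e.\ \cite[Theorem 17.32]{HindmanStrauss2011}), whereas you need the companion statement that every $A\in\sigma\U$ \emph{contains} some $\MT(\sigma,(x_t))$. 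You are right to flag this as the one nontrivial input; it is exactly \cite[Theorem 17.31]{HindmanStrauss2011} (extended to $\beta\Z$ by the same $\U\mapsto-\U$ trick the paper uses for Theorem \ref{theorem:mt-system-linear-com-idem}), or alternatively the recursive Galvin--Glazer selection you describe, so the gap is a citation rather than a missing idea. The rest checks out: since $\sigma_i\approx\sigma$ with $\sigma$ reduced, the nonzero positions of $\sigma_i$ decompose into the $n$ maximal runs left after deleting zeros (note these are the blocks you want, rather than ``all positions carrying the value $a_r$'', which would be ambiguous if some value recurs in $\sigma$); each block is nonempty and the blocks are ordered, so each witness lands in $\MT(\sigma,(x_t))\subseteq A$, and the column relation $M(c_1,\dots,c_m)^T=0$ kills $P$ on the witnesses. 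One cosmetic remark: you overload $a_i$ for both the entries of $\sigma$ and the witnesses, which should be disentangled in a final write-up.
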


The core of the proof of Theorem \ref{lingen} is the following property of strongly summable ultrafilters. Basically, it says that strongly summable ultrafilters contain sets of sums that are so sparse that they behave as strings. 

\begin{lem}\label{lemma:lingen_lemma_2} Let $\U$ be a strongly summable ultrafilter and $\sigma\in\Z^{<\omega}$ be a reduced string. If $P(x_1,\dots,x_m)\in\Z\left[x_{1},\dots,x_{m}\right]$ is a linear homogeneous polynomial such that $\sigma\U\models P(x_1,\dots,x_m)=0$, then $P(x_1,\dots,x_m)=0$ has a solution in $\mathfrak{G}(\sigma)$.
\end{lem}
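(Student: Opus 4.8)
The statement to prove is Lemma~\ref{lemma:lingen_lemma_2}: if $\U$ is strongly summable, $\sigma$ is reduced, and $\sigma\U\models P=0$, then $P=0$ has a solution in $\mathfrak{G}(\sigma)$. This is the difficult direction, essentially $(2)\Rightarrow(3)$ of the main theorem. Let me think about what tools are available and how the pieces must fit together.

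Let me write $\sigma=(a_1,\dots,a_n)$ and $P=\sum_{i=1}^m c_i x_i$. The hypothesis $\sigma\U\models P=0$ means: for every set $A\in\sigma\U$, there are $y_1,\dots,y_m\in A$ with $\sum c_i y_i=0$. I want to produce strings $\sigma_1,\dots,\sigma_m\in\mathfrak{G}(\sigma)$, all of the same length, with $\sum c_i\sigma_i=0$.

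The plan is to prove this directly, by extracting an explicit string solution from a single solution living in a sufficiently sparse Milliken--Taylor system. Write $\sigma=(a_1,\dots,a_n)$ and $P=\sum_{\ell=1}^m c_\ell x_\ell$. The first move is to commit, \emph{before} anything else, to an integer $k$ that is large relative to the fixed data: setting $B=\big(\sum_{\ell=1}^m|c_\ell|\big)\max_{1\le i\le n}|a_i|$, I would take $k\ge 4$ with $k>B+2$. Since a strongly summable ultrafilter is idempotent (Theorem~\ref{theorem:ss-ultra-idempotent}) and non-principal, we have $\Z\setminus\{0\}\in\U$, so Lemma~\ref{lemma:ss_ultrafilter_main_lemma} applied to $A=\Z\setminus\{0\}$ produces an injective sequence $(x_n)_{n\in\N}$ with $x_n\ne 0$ for all $n$ (each $x_n$ lies in $\FS\big((x_n)_{n\in\N}\big)\subseteq\Z\setminus\{0\}$), with $\FS\big((x_n)_{n\ge m}\big)\in\U$ for every $m$, and with the growth bound $|x_{n+1}|>k\,|\sum_{t=1}^{n}x_t|$. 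The second property is exactly what lets Theorem~\ref{theorem:mt-system-linear-com-idem} apply, yielding $\MT\big(\sigma,(x_n)_{n\in\N}\big)\in\sigma\U$.

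Next I would feed the hypothesis $\sigma\U\models P=0$ the set $\MT\big(\sigma,(x_n)_{n\in\N}\big)\in\sigma\U$, obtaining $y_1,\dots,y_m$ in this Milliken--Taylor system with $\sum_{\ell=1}^m c_\ell y_\ell=0$. By definition each $y_\ell=\sum_{i=1}^n a_i\big(\sum_{j\in F_i^\ell}x_j\big)$ with $F_1^\ell<\dots<F_n^\ell$; since the blocks are increasing, every index $j$ lies in at most one $F_i^\ell$, so $y_\ell=\sum_j d_j^\ell x_j$ where $d_j^\ell=a_i$ when $j\in F_i^\ell$ and $d_j^\ell=0$ otherwise, and $|d_j^\ell|\le\max_i|a_i|$. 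Reading these coefficients in increasing order of $j$ and padding with trailing zeros to a common length $J=\max_\ell\max F_n^\ell$ gives, for each $\ell$, a string $\sigma_\ell=(d_1^\ell,\dots,d_J^\ell)$ whose nonzero entries are $a_1$ (appearing at least once, as $F_1^\ell\ne\varnothing$), then $a_2$, \dots, then $a_n$, separated by blocks of zeros. Using only $()\approx(0)$, $(a)\approx(a,a)$ and coherence of $\approx$ with concatenation, each such $\sigma_\ell$ is coherent with $\sigma$, i.e.\ $\sigma_\ell\in\mathfrak{G}(\sigma)$.

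It then remains to verify that $\sum_{\ell}c_\ell\sigma_\ell=0$ as strings. Writing $e_j=\sum_{\ell}c_\ell d_j^\ell$, the relation $\sum_{\ell}c_\ell y_\ell=0$ becomes $\sum_j e_j x_j=0$, with $|e_j|\le B$. The hard part — and the whole reason strong summability enters — is to promote this single scalar identity to the coordinatewise vanishing $e_j=0$ for all $j$, and this is where the constant $k$ is spent. If some $e_j\ne 0$, let $J_0$ be the largest such index (so $x_{J_0}\ne 0$); a routine geometric estimate from the growth bound yields $\sum_{j<J_0}|x_j|<\tfrac{1}{k-2}|x_{J_0}|$, whence, as $e_{J_0}$ is a nonzero integer, $|\sum_j e_j x_j|=|\sum_{j\le J_0}e_j x_j|\ge|x_{J_0}|-B\sum_{j<J_0}|x_j|>|x_{J_0}|\big(1-\tfrac{B}{k-2}\big)>0$, contradicting $\sum_j e_j x_j=0$. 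Hence every $e_j=0$, that is $\sum_\ell c_\ell\sigma_\ell=0$, and the strings $\sigma_1,\dots,\sigma_m\in\mathfrak{G}(\sigma)$ of common length $J$ witness a solution of $P=0$ in $\mathfrak{G}(\sigma)$.

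I expect the main obstacle to be precisely this last promotion step: the hypothesis only delivers one solution of $P=0$ sitting inside a Milliken--Taylor system, and the content of the lemma is that the extreme sparseness of $(x_n)$ forces that solution to respect the block decomposition coordinate by coordinate, so that it descends to a genuine identity of strings. Everything hinges on fixing $k$ large enough — large enough to dominate the bound $B$ read off from the fixed coefficients of $P$ and $\sigma$ — \emph{before} invoking Lemma~\ref{lemma:ss_ultrafilter_main_lemma}, since $k$ cannot be adjusted after the sequence $(x_n)$ has been produced.
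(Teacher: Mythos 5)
Your proposal is correct and follows essentially the same route as the paper's proof: fix in advance a constant dominating $\sum_{i,\ell}|a_i c_\ell|$, extract a sparse sequence via Lemma~\ref{lemma:ss_ultrafilter_main_lemma}, locate a solution of $P=0$ inside $\MT\big(\sigma,(x_n)_{n\in\N}\big)\in\sigma\U$ (Theorem~\ref{theorem:mt-system-linear-com-idem}), and use the growth condition at the largest index with nonvanishing combined coefficient to force the scalar identity to hold coordinatewise, which yields the string solution. The only differences are notational (your coefficient strings $d_j^{\ell}$ in place of the paper's indicators $\delta_{i,j,d}$ and matrix $S$), together with your slightly more explicit geometric-series estimate and the choice $A=\Z\setminus\{0\}$ to guarantee $x_{J_0}\neq 0$.
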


\begin{proof}
	Let $c_1,\dots,c_m$ be the coefficients of $P$ and consider $\sigma=(a_1,\dots,a_n)$. Let $M = \sum_{i=1}^{n}\left(\sum_{j=1}^{m} |a_i c_j|\right) + 1$. By Lemma \ref{lemma:ss_ultrafilter_main_lemma} there is a sequence of integers $(z_t)_{t\in\N}$ such that, for all $k\in\N$, $\FS\big((z_t)_{t\geq k}\big)\in\U$ and $|z_{t+1}|>M\sum_{i=1}^t |z_i|$. If $Y = \MT(\sigma,(z_t)_{t\in\N})$, by Theorem \ref{theorem:mt-system-linear-com-idem}, we have that $Y\in \sigma\U$. As $\sigma\U\models  P(x_1,\dots,x_m)=0$, one can find $y_1,\dots,y_m\in Y$ such that $P(y_1,\dots,y_m)=0$. By the definition of $Y$, for every $j\leq m$ and $i\leq n$, one can find natural numbers $d_{1,i,j}<\dots< d_{k_{i,j},i,j}$ such that $y_j = \sum_{i=1}^{n}\sum_{t=1}^{k_{i,j}} a_i z_{d_{t,i,j}}$ and, for each $i<n$, $d_{k_j,i,j} < d_{1,i+1,j}$. Hence, $P(y_1,\dots,y_m) = \sum_{j}^{m}\sum_{i=1}^{n}\sum_{t=1}^{k_{i,j}} a_i c_j z_{d_{t,i,j}}$.
	
Consider $\bar{d}=\max\{d_{t,i,j}\mid j\leq m\text{ and } i\leq n\text{ and } t\leq k_{i,j}\}$ and for each $i\leq n$, $j\leq m$ and $d\in\{0,\dots, \bar{d}\}$, define
\begin{equation*}
	\delta_{i,j,d} = \begin{cases}
		1, &\text{ if } a_i z_d \text{ appears in } y_j;\\
		0, & \text{ otherwise.}
	\end{cases}
\end{equation*}
Then,
\begin{equation}\label{eq:p_ys}
	P(y_1,\dots,y_m) = \sum_{i=1}^{n}\sum_{j=1}^{m}\sum_{d=0}^{\bar{d}} a_i c_j \delta_{i,j,d} z_d.
\end{equation}

We claim that the following are equivalent:
\begin{description}[labelindent=1cm]
	\item[a)] $P(y_1,\dots,y_m) = 0$;
	\item[b)] for all $d\in\{0,\dots,\bar{d}\}$, $\sum_{i=1}^{n}\sum_{j=1}^{m}a_ic_j\delta_{i,j,d}=0$.
\end{description}

That b) implies a) is immediate. To prove that a) implies b), we proceed by contradiction. If b) is false, it is possible to find the greatest element $\tilde{d}$ among all  $d\in\{0,\dots,\bar{d}\}$ such that $\sum_{i=1}^{n}\sum_{j=1}^{m}a_ic_j\delta_{i,j,d}\neq 0$. By definition, for any $d > \tilde{d}$, $\sum_{i=1}^{n}\sum_{j=1}^{m}a_ic_j\delta_{i,j,d}=0$. Hence, from the equation (\ref{eq:p_ys}), one has
\begin{equation}\label{eq:2}
	\sum_{i=1}^{n}\sum_{j=1}^{m}\sum_{d=0}^{\tilde{d}} a_i c_j \delta_{i,j,d} z_d = 0.
\end{equation}
Then,
\begin{equation*}
	\left| \sum_{d=0}^{\tilde{d}-1}\sum_{i=1}^{n}\sum_{j=1}^{m} a_i c_j \delta_{i,j,d} z_d \right| \leq \sum_{d=0}^{\tilde{d}-1} \left(\sum_{i=1}^{m}\sum_{j=1}^{m} |a_i c_j|\right) z_d < M\left|\sum_{d=0}^{\tilde{d}-1} z_d\right| < z_{\tilde{d}},
\end{equation*}
which is absurd since, from equation (\ref{eq:2}) one can derive
\begin{equation*}
	\left| \sum_{d=0}^{\tilde{d}-1}\sum_{i=1}^{n}\sum_{j=1}^{m} a_i c_j \delta_{i,j,d} z_d \right| = \left| \sum_{i=1}^{n}\sum_{j=1}^{m} a_i c_j \delta_{i,j,\tilde{d}} z_{\tilde{d}} \right|.
\end{equation*}

Hence, the claim is proved.

Let us define an $m\times (\bar{d}+1)$ $S$ by constructing its columns. For each $j\leq m$, the $j$-th column $s_j = (s_{d,j})_{d=0}^{\bar{d}}$ of $S$ is defined as
\begin{equation*}
	s_{d,j} = \begin{cases}
		a_j, & \text{ if } \delta_{i,j,d} = 1;\\
		0, & \text{ otherwise.}
	\end{cases}
\end{equation*}

This vector is well defined as for all $j_{1},j_{2}\leq m$, $i\leq n$ and $d\in\{0,\dots,\bar{d}\}$, we have that $\delta_{i,j_1,d}=\delta_{i,j_2,d}=1$ implies $j_1=j_2$. As each $y_{j}$ has the form $y_{j}=a_{1}z_{d_{1,1,j}}+\dots+a_{1}z_{d_{k_{i,j},1,j}}+a_{2}z_{d_{1,2,j}}+\dots+a_{n}z_{d_{k_{n,j},n,j}}$ with $d_{t,i,j}< d_{t+1,i,j}$ for all $j\leq m$, $i\leq n$  and $t< k_{i,j}$, and $d_{k_{i,j},i,j}< d_{1,i+1,j}$ for each $i<n$, we trivially have that each $s_j$ is equivalent to $\sigma$; thus $S$ is coherent with $\sigma$. Therefore, for each $d\in \{0,\dots,\bar{d}\}$, the $d$-th component of $S(c_1,\dots, c_m)^{T}$ is $\sum_{i=1}^{m}\sum_{j=1}^{n}c_i a_j \delta_{i,j,d}$, which is zero by the claim b). Invoking Lemma \ref{lemma:string_solution_equiv}, we have that the equation $P\left(x_{1},\dots,x_{m}\right)=0$ has a solution in $\mathfrak{G}(\sigma)$.
\end{proof}

\section{Foundational Issues}\label{FI}

In the formulation and proof of Theorem \ref{lingen} we used strongly summable ultrafilters for they contain sets of sums that behave like strings, a property that is not shared by all ultrafilters. However, the proof of the equivalence between (1) and (3) in Theorem \ref{lingen} can be done entirely in ZFC; to this end, we need to find additively idempotent ultrafilters that can play the role of strongly summable ultrafilters in the proof of Lemma \ref{lemma:lingen_lemma_2}. The following known fact will enable us to find such ultrafilters:

\begin{lem}\cite[Lemma 5.11]{HindmanStrauss2011}\label{lemma:sigma_p_ultra}
	Consider $S$ to be $\N$ or $\Z$. Let $\left(x_t\right)_{t\in\N}$ be a sequence of elements of $S$. Then, there is an idempotent $\U\in\beta S$ such that, for every $k\in\N$, $\FS\big((x_t)_{t\geq k}\big)\in \U$.
\end{lem}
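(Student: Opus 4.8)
The plan is to exhibit the required ultrafilter as an idempotent inside a compact subsemigroup of $(\beta S,+)$ and then appeal to the Ellis--Numakura lemma, which guarantees an idempotent in any compact right-topological semigroup. Writing $A_k = \FS\big((x_t)_{t\geq k}\big)$ and recalling that $\overline{A}=\{\U\in\beta S : A\in\U\}$ for $A\subseteq S$, I would set
\[
	T = \bigcap_{k\in\N}\overline{A_k}.
\]
Since the tails are nested, $A_{k+1}\subseteq A_k$, the sets $\overline{A_k}$ form a decreasing chain of nonempty closed subsets of the compact space $\beta S$; so $T$ is nonempty and compact, and by construction every $\U\in T$ satisfies $A_k\in\U$ for all $k$. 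Thus it suffices to locate an idempotent inside $T$.

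The hard part will be verifying that $T$ is a subsemigroup, i.e. closed under $+$. Given $\V,\W\in T$ and a fixed $k$, I must show $A_k\in\V+\W$, which by the definition of the sum of ultrafilters unwinds to $\{x\in S : -x+A_k\in\W\}\in\V$. The decisive observation is an absorption property of tails: for $x\in A_k$, written $x=\sum_{t\in F}x_t$ with $\max F<\ell$, every $y\in A_\ell$ is indexed strictly above $F$, so $x+y\in A_k$; hence $A_\ell\subseteq -x+A_k$, and since $A_\ell\in\W$ (as $\W\in T$) we get $-x+A_k\in\W$. This holds for every $x\in A_k$, and $A_k\in\V$ because $\V\in T$, so the witnessing set contains $A_k$ and therefore lies in $\V$. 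This yields $A_k\in\V+\W$ for all $k$, i.e. $\V+\W\in T$, and I expect this absorption computation to be the only genuinely delicate point.

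Finally, with $T$ a nonempty compact subsemigroup of the right-topological semigroup $(\beta S,+)$, Ellis--Numakura supplies an idempotent $\U\in T$; membership in $T$ means precisely that $\FS\big((x_t)_{t\geq k}\big)\in\U$ for every $k$, which is what we want. The argument requires no injectivity of $(x_t)_{t\in\N}$: degenerate cases are harmless because each $A_k$ already contains $x_k$, so the relevant closures stay nonempty.
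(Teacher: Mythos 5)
Your proof is correct, and it is essentially the same argument as the paper's source: the paper offers no proof of this lemma, citing it directly as Lemma 5.11 of Hindman--Strauss, and your route (the nested closed sets $\overline{\FS\big((x_t)_{t\geq k}\big)}$ have nonempty compact intersection $T$, the tail-absorption computation shows $T$ is a subsemigroup of $(\beta S,+)$, and Ellis--Numakura then yields an idempotent in $T$) is precisely the standard proof given there. The absorption step and the remark that injectivity is not needed are both handled correctly.
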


\begin{defn}
Giving $c_1,\dots,c_m\in\Z\setminus\{0\}$, let $P\left(x_1,\dots,x_m\right)=\sum_{i=1}^{m}c_{i}x_{i}$ and $\sigma=(a_1,\dots,a_n)\in\Z^{<\omega}$ be a reduced string. Define $M = \sum_{i=1}^n\left(\sum_{j=1}^m |a_i c_j| \right) + 1$. We say that an idempotent ultrafilter $\U\in\bZ$ is a $(\sigma,P)$-ultrafilter if there is a sequence $(x_t)_{t\in\N}$ in $\Z$ such that, for each $k\in\N$, $\FS\big((x_t)_{t\geq k}\big)\in\U$ and $|x_{k+1}|>M|\sum_{i=1}^k x_i|$.
\end{defn}

Lemma \ref{lemma:sigma_p_ultra} easily implies the existence of $(\sigma,P)$-ultrafilters in ZFC. Using the existence of $(\sigma,P)$-ultrafilters and repeating, mutatis mutandis, the arguments of the proof of the Lemma \ref{lemma:lingen_lemma_2}, one can settle the following analogue of Theorem \ref{lingen} in ZFC:

\begin{thm}
	Let $\sigma\in\Z^{\omega}$ be a reduced string and $P(x_1,\dots,x_m)\in\Z\left[x_{1},\dots,x_{m}\right]$ be linear and homogeneous. Then, the following are equivalent:
	\begin{enumerate}
		\item for all idempotents $\U\in\bZ$, $	\sigma\U\models P\left(x_1,\dots,x_m\right) = 0$;
		\item there exists a $(\sigma,P)$-ultrafilter $\U\in\bZ$ such that $\sigma\U\models P\left(x_1,\dots,x_m\right) = 0$;
		\item $P$ has a solution in $\mathfrak{G}(\sigma)$.
	\end{enumerate}
\end{thm}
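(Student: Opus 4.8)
The plan is to prove the cyclic chain of implications $(3)\Rightarrow(1)\Rightarrow(2)\Rightarrow(3)$, mirroring the architecture of Theorem \ref{lingen} but with $(\sigma,P)$-ultrafilters — which exist outright in ZFC — playing the role that strongly summable ultrafilters played there. The implication $(3)\Rightarrow(1)$ is immediate: it is exactly the content of Lemma \ref{lemma:lingen_lemma_1}, which asserts that whenever $P$ has a solution in $\mathfrak{G}(\sigma)$ one has $\sigma\U\models P(x_1,\dots,x_m)=0$ for \emph{every} additively idempotent $\U\in\bZ$, so in particular for all idempotents.

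For $(1)\Rightarrow(2)$ I would first establish that $(\sigma,P)$-ultrafilters exist. With $M=\sum_{i=1}^{n}\big(\sum_{j=1}^{m}|a_ic_j|\big)+1$ as in the definition, one can directly choose a sequence $(x_t)_{t\in\N}$ of integers obeying the growth condition $|x_{k+1}|>M|\sum_{i=1}^{k}x_i|$ (for instance a sufficiently fast geometric sequence); Lemma \ref{lemma:sigma_p_ultra} then produces an idempotent $\U\in\bZ$ with $\FS\big((x_t)_{t\geq k}\big)\in\U$ for every $k$, and this $\U$ is by construction a $(\sigma,P)$-ultrafilter. Since a $(\sigma,P)$-ultrafilter is in particular idempotent, hypothesis $(1)$ applies to it and yields $\sigma\U\models P(x_1,\dots,x_m)=0$, which is precisely statement $(2)$.

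The substance lies in $(2)\Rightarrow(3)$, where I would run the proof of Lemma \ref{lemma:lingen_lemma_2} essentially verbatim. That proof used strong summability only to invoke Lemma \ref{lemma:ss_ultrafilter_main_lemma} and extract a sparse sequence $(z_t)_{t\in\N}$ with $\FS\big((z_t)_{t\geq k}\big)\in\U$ for all $k$, together with a growth estimate governed by the very constant $M$ above. But the definition of a $(\sigma,P)$-ultrafilter records precisely those two properties, for precisely this $M$: it furnishes a sequence $(x_t)_{t\in\N}$ with $\FS\big((x_t)_{t\geq k}\big)\in\U$ for every $k$ and $|x_{k+1}|>M|\sum_{i=1}^{k}x_i|$. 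One therefore uses $(x_t)_{t\in\N}$ in place of $(z_t)_{t\in\N}$ and proceeds identically: set $Y=\MT(\sigma,(x_t)_{t\in\N})$, which lies in $\sigma\U$ by Theorem \ref{theorem:mt-system-linear-com-idem} (applicable because $\U$ is idempotent and carries the required tail finite-sum sets); use $\sigma\U\models P(x_1,\dots,x_m)=0$ to select $y_1,\dots,y_m\in Y$ with $P(y_1,\dots,y_m)=0$; then carry out the $\delta_{i,j,d}$-bookkeeping and the maximal-index absurdity argument to conclude that the coefficient of each $z_d$ (now $x_d$) vanishes; and finally assemble these vanishing relations into a matrix whose columns are coherent with $\sigma$, invoking Lemma \ref{lemma:string_solution_equiv} to exhibit a solution of $P$ in $\mathfrak{G}(\sigma)$.

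I expect the only point requiring genuine care — and it is a bookkeeping verification rather than a new idea — to be checking that the sequence supplied by the definition of a $(\sigma,P)$-ultrafilter matches, property for property and constant for constant, the sparse sequence extracted from Lemma \ref{lemma:ss_ultrafilter_main_lemma} in the proof of Lemma \ref{lemma:lingen_lemma_2}. Since the definition of $(\sigma,P)$-ultrafilter was tailored precisely to record that data, the match is exact and no new estimate is needed; all the analytic work has already been absorbed into Lemma \ref{lemma:lingen_lemma_2}, and the present theorem is its ZFC repackaging, obtained by trading the existence of a strongly summable ultrafilter (independent of ZFC) for the existence of a $(\sigma,P)$-ultrafilter (provable in ZFC via Lemma \ref{lemma:sigma_p_ultra}).
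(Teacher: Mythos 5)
Your proposal is correct and follows exactly the route the paper intends: $(3)\Rightarrow(1)$ via Lemma \ref{lemma:lingen_lemma_1}, $(1)\Rightarrow(2)$ via the ZFC existence of $(\sigma,P)$-ultrafilters from Lemma \ref{lemma:sigma_p_ultra}, and $(2)\Rightarrow(3)$ by rerunning the proof of Lemma \ref{lemma:lingen_lemma_2} with the sequence supplied by the definition of a $(\sigma,P)$-ultrafilter in place of the one extracted from Lemma \ref{lemma:ss_ultrafilter_main_lemma}. The paper itself only sketches this as a ``mutatis mutandis'' remark, so your write-up is, if anything, more explicit than the original.
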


Let us observe that the conditions defining a $(\sigma,P)$-ultrafilter are much weaker than those defining a strongly summable ultrafilter, since we are not asking that every $A\in\U$ must contain the finite sums of some sequence, but just that there is one such set with the needed growth condition in $\U$. In fact, a strongly summable ultrafilter is automatically a $(\sigma,P)$-ultrafilter for all $\sigma\in\Z^{<\omega},P$ linear homogeneous polynomial. It is unclear (to us) if the converse holds as well:

\begin{question} Is it provable in ZFC that there exists $\U\in\beta\Z$ that is a $(\sigma,P)$-ultrafilter for all $\sigma\in\Z^{<\omega}$ and all linear homogeneous polynomial $P$? If not,  is it true that such an ultrafilter is necessarily a strongly summable ultrafilter?
\end{question}

Moreover, we know that if $\U$ is an essential ultrafilter, then for any $\sigma\in\Z^{<\omega}$ $\sigma\U$ solves all Rado equations. We do not know if this implication can be reversed\footnote{We know that there are ultrafilters that are not idempotent with this property, for example any $\U\in\overline{K(\beta\Z,\odot)}$.}, so we conclude this section with one last question:

\begin{question}
Let $\U$ be an additively idempotent ultrafilter of $\beta\Z$ such that for each $\sigma\in\Z^{<\omega}$, $\sigma\U$ witnesses the partition regularity of any Rado equation. Is it true that $\U$ must be essential? I.e. among all additively idempotent ultrafilters of $\beta\Z$, is the class of essential idempotent ultrafilters maximal with the respect of the property of witnessing the partition regularity of all Rado equations?
\end{question}

\section{Examples}\label{exampleeee}

\subsection{The case $\sigma=(1)$}\label{s1}

\phantom{s}

In this subsection, we study which linear homogeneous equations with integers coefficients are witnessed by all idempotents $\U\in\beta\Z$ and hence, by Theorem \ref{theorem:facts_about_witnessing_pr}, by all possible linear combinations of $\sigma\U$ with integer coefficients. From Theorem \ref{lingen}, one can see that this is equivalent to ask when such equations have a solution in $\mathfrak{G}\big((1)\big)$.

\begin{lem}\label{c=1}
	Giving $c_1,\dots,c_m\in\Z\setminus\{0\}$, the equation $c_1 x_1+\dots+c_m x_m=0$ has a solution in $\mathfrak{G}\big((1)\big)$ if and only if for every $j\leq m$, there is a non-empty $H_j\subseteq \{1,\dots,m\}\setminus\{j\}$ such that $c_j + \sum_{l\in H_j}c_l = 0$.
\end{lem}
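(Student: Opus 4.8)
The plan is to first pin down the equivalence class $\mathfrak{G}\big((1)\big)$ explicitly, and then use Lemma \ref{lemma:string_solution_equiv} to turn ``has a solution in $\mathfrak{G}\big((1)\big)$'' into a purely combinatorial statement about the coefficients. First I would show that $\mathfrak{G}\big((1)\big)$ is exactly the set of finite strings with entries in $\{0,1\}$ that contain at least one $1$. Indeed, the generating moves of $\approx$ applied to $(1)$ (inserting and deleting zeros, duplicating and merging adjacent equal entries, all compatible with concatenation) can only produce strings with entries in $\{0,1\}$; conversely, deleting the zeros and merging consecutive equal entries sends any such string to its reduced form, which is $(1)$ precisely when at least one $1$ is present. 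Thus a string is coherent with $(1)$ if and only if it is a nonzero $0$--$1$ string.

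Next, applying Lemma \ref{lemma:string_solution_equiv} with $\sigma=(1)$, the equation $c_1x_1+\dots+c_mx_m=0$ has a solution in $\mathfrak{G}\big((1)\big)$ if and only if there is a matrix $M$ with entries in $\{0,1\}$, with no zero column, satisfying $M(c_1,\dots,c_m)^T=0$. Reading this row by row, each row of $M$ is the indicator vector of a set $I\subseteq\{1,\dots,m\}$, the equation $M(c_1,\dots,c_m)^T=0$ says $\sum_{i\in I}c_i=0$ for every row-support $I$, and the absence of a zero column says that every index $j\leq m$ lies in the support of some row. So the whole condition reduces to: the index set $\{1,\dots,m\}$ can be covered by zero-sum subsets. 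For the direction ($\Leftarrow$), given the sets $H_j$, I would take the $m\times m$ matrix whose $j$-th row is the indicator of $I_j:=\{j\}\cup H_j$; each row is zero-sum since $c_j+\sum_{l\in H_j}c_l=0$, and column $j$ is hit by row $j$, so $M$ has no zero column. For ($\Rightarrow$), given such an $M$, fix $j$; since column $j$ is nonzero I pick a row whose support $I$ contains $j$, and set $H_j:=I\setminus\{j\}$, so that $c_j+\sum_{l\in H_j}c_l=\sum_{i\in I}c_i=0$.

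The only point needing care (and the single place where the hypothesis $c_j\in\Z\setminus\{0\}$ enters) is the nonemptiness of $H_j$ in the forward direction: a priori the chosen support could equal $\{j\}$, but then $\sum_{i\in I}c_i=c_j$, which is nonzero, contradicting that the row is zero-sum. Hence $I\supsetneq\{j\}$ and $H_j\neq\emptyset$, as required. I expect this nonemptiness check to be the main (though minor) obstacle; everything else is the translation through Lemma \ref{lemma:string_solution_equiv} and the elementary covering argument.
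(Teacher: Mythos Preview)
Your proposal is correct and essentially identical to the paper's proof: both directions go through Lemma~\ref{lemma:string_solution_equiv}, build the same $m\times m$ indicator matrix from the sets $\{j\}\cup H_j$ for the converse, and in the forward direction pick a row containing $j$, set $H_j$ to be the remaining support, and use $c_j\neq 0$ to force $H_j\neq\emptyset$. The only cosmetic difference is that you spell out explicitly that $\mathfrak{G}\big((1)\big)$ consists of the nonzero $\{0,1\}$-strings, whereas the paper leaves this implicit.
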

\begin{proof} Let $\mathbf{c}=\left(c_{1},\dots,c_{m}\right)$. If the equation has a solution in $\mathfrak{G}\big((1)\big)$, then there is a $k\times m$ matrix $M=(\alpha_{ij})$ coherent with $(1)$ such that $M\mathbf{c}^T=0$. This means that, for each $j\leq m$ there will be a $i\leq k$ such that $1$ appears in the $j$th position of the vector $\alpha_i = (\alpha_{i1},\dots,\alpha_{im})$. Define $H_j = \{l\leq m\mid l\neq j \text{ and } \alpha_{il}\neq 0\}$, then $H_j=\{l\leq m\mid l\neq j \text{ and } \alpha_{il} = 1\}$. As $c_j\neq 0$ and $M\mathbf{c}^T = 0$, $H_j\neq\emptyset$ and $c_j + \sum_{l\in H_j}c_l = 0$.

Conversely, for each $j,l\leq m$, define
	\begin{equation*}
		\alpha_{jl} =
		\begin{cases}
			1, & \text{ if } l\in H_j\cup \{j\}; \text{ or }\\
			0, & \text{ otherwise }
		\end{cases}
	\end{equation*}
and let $M=(\alpha_{jl})$. Note that, for each $j\leq m$, $\alpha_{jj}=1$; hence $M$ is coherent with $(1)$. Moreover, for each $j\leq m$, the $j$th coordinate of $M\mathbf{c}^T$ is $\sum_{l=1}^{m}\alpha_{jl}c_l = c_j + \sum_{l\in H_j} c_l = 0$, which proves that $M\bold{c}^T=0$.
\end{proof}

\begin{exam} The partition regularity of the equation $4x_{1}+2x_{2}+3x_{3}-5x_{4}-x_{5}-2x_{6}=0$ is witnessed by all idempotent ultrafilters $\U\in\beta\Z$, as its coefficients satisfy the condition of Proposition \ref{c=1}. The same holds for the equation $2x_{1}-2x_{2}-x_{3}-x_{4}=0$.
By Lemma \ref{lemma:string_solution_equiv}, we have that the equation $4x_{1}+2x_{2}+3x_{3}-5x_{4}-x_{5}-2x_{6}=0$ admits injective solutions in $\mathfrak{G}((1))$, as

\begin{equation*}
		M=\begin{pmatrix}
			1 & 1 & 0 & 1 & 1 & 0 \\
			0 & 1 & 1 & 1 & 0 & 0 \\
			0 & 0 & 1 & 0 & 1 & 1 \\
			0 & 1 & 0 & 0 & 0 & 1
		\end{pmatrix}\cdot
		\begin{pmatrix}
		4\\
		2\\
		3\\
		-5\\
		-1\\
		-2
		\end{pmatrix}=
		\begin{pmatrix}
		0\\
		0\\
		0\\
		0
		\end{pmatrix}
	\end{equation*}
and the columns of $M$ are mutually distinct; however, the equation $2x_{1}-2x_{2}-x_{3}-x_{4}=0$ does not admit injective solutions in $\mathfrak{G}((1))$: in fact, if $\sigma_{1},\sigma_{2},\sigma_{3},\sigma_{4}\in\mathfrak{G}((1))$ are strings of length $k$ such that $2\sigma_{1}-2\sigma_{2}-\sigma_{3}-\sigma_{4}=0$, for every $i\leq k$, we have that $\sigma_{3,i}=1$ if and only if $\sigma_{4,i}=1$ (where $\sigma_{h,i}$ denotes the $i-th$ entry in the string $\sigma_{h}$). In fact, it is immediate to see that if $\sigma_{3,i}=1$ then necessarily $\sigma_{1,i}=1,\sigma_{2,i}=0,\sigma_{4,i}=1$, and the same if we let $\sigma_{4,i}=1$. Therefore necessarily $\sigma_{3}=\sigma_{4}$.
\end{exam}
\subsection{The case $P(x_{1},x_{2},x_{3})=c_{1}x_{1}+c_{2}x_{2}+c_{3}x_{3}$}

\phantom{a}

A characterisation similar to that of Proposition \ref{c=1} could be given, in principle, for strings of arbitrary length, but at the cost of readability. Here we reverse the problem, and we ask for which strings $\sigma=\left(a_{1},\dots,a_{n}\right)$ can we solve the equation $P\left(x_1,x_2,x_3\right) = c_1x_1+c_2x_2+c_3x_3=0$ in $\mathfrak{G}(\sigma)$. We know that in such a case $P\left(x_{1},x_{2},x_{3}\right)$ must be a Rado polynomial, i.e. some non-empty subset of $\{c_1,c_2,c_3\}$ sums zero.

 We will divide the treatment into two parts: first the case in which $c_1+c_2+c_3=0$ and, second, the case in which there is a pair inside $\{c_1,c_2,c_3\}$ that sums to zero.\\

\textbf{Case 1:} $c_1+c_2+c_3=0$. If we allow for constant solutions, trivially this equation will have a solution in $\mathfrak{G}(\sigma)$ for all possible choices of $\sigma$. Therefore we restrict here to study the conditions under which such polynomials have an injective solution in $\mathfrak{G}(\sigma)$. Without loss of generality, we can assume $\gcd(c_2,c_3)=1$.

\begin{oss}\label{obs} Let $\sigma\in\Z^{<\omega}$. Then $P$ has an injective solution in $\mathfrak{G}((\sigma))$ if and only if it has a non-constant solution in $\mathfrak{G}((\sigma))$. 
\end{oss}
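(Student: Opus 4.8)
The plan is to establish the equivalence by a direct algebraic argument exploiting the Rado relation $c_1+c_2+c_3=0$ together with the standing hypothesis that each $c_i\neq 0$. One implication is immediate: an injective solution $\sigma_1,\sigma_2,\sigma_3\in\mathfrak{G}(\sigma)$ consists of three \emph{mutually distinct} strings, so in particular they are not all equal, and hence the solution is non-constant. Thus the only content lies in the reverse implication, and the strategy is to prove its contrapositive.

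For that direction I would show that, under $c_1+c_2+c_3=0$, \emph{any} solution in which two of the three strings coincide is automatically constant. Concretely, suppose $\sigma_1,\sigma_2,\sigma_3\in\mathfrak{G}(\sigma)$ are of the same length, satisfy $c_1\sigma_1+c_2\sigma_2+c_3\sigma_3=0$, and that $\sigma_i=\sigma_j$ for some pair $i\neq j$; let $k$ denote the remaining index. Since $c_i+c_j=-c_k$, substituting $\sigma_j=\sigma_i$ gives
\begin{equation*}
0 = (c_i+c_j)\sigma_i + c_k\sigma_k = c_k(\sigma_k-\sigma_i),
\end{equation*}
and as $c_k\neq 0$ this forces $\sigma_k=\sigma_i=\sigma_j$, so the solution is constant. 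The three symmetric cases (according to which pair coincides) are handled uniformly by this $i,j,k$ bookkeeping, so no separate computation is needed for each.

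Taking the contrapositive yields exactly what is wanted: if a solution is non-constant, then no two of its strings coincide, i.e. $\sigma_1,\sigma_2,\sigma_3$ are mutually distinct and the solution is injective. Combining the two directions gives the equivalence. The argument is elementary, and I do not expect a genuine obstacle; the single decisive observation is that the condition $c_1+c_2+c_3=0$, with nonzero coefficients, collapses any coincidence of two strings into a coincidence of all three. The only care required is to phrase the cancellation identity once in a way that covers all three pairings at once, rather than writing out three nearly identical subcases.
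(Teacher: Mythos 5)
Your proposal is correct and follows essentially the same route as the paper's own argument: the forward direction is immediate, and for the converse you use $c_i+c_j=-c_k$ with $c_k\neq 0$ to show that any coincidence of two strings forces all three to coincide. Nothing further is needed.
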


\if
In fact, the implication is immediate, as injective solutions are trivially non constant; for the reverse implication, if $\sigma_{1},\sigma_{2},\sigma_{3}\in\mathfrak{G}((\sigma))$ are not mutually distinct and solve $c_{1}x_{1}+c_{2}x_{2}+c_{3}x_{3}=0$, then at least two of them (say, $\sigma_{1}$ and $\sigma_{2}$) are equal. But then, as $c_{1}+c_{2}=-c_{3}$, we deduce that $-c_{3}\sigma_{1}+c_{3}\sigma_{3}=0$ which forces $\sigma_{1}=\sigma_{3}$, i.e. a constant solution.
\fi 

To solve our problem we first notice that, without loss of generality, we can restrict to the case $\sigma=\left(a_{1},a_{2}\right)$. In fact, we have the following:

\begin{lem}\label{thm:injective_sol_pol_3_var_sum_coef_zero}
The equation $P\left(x_{1},x_{2},x_{3}\right)=0$ has an injective solution in $\mathfrak{G}(\sigma)$ if and only if there is a $i<n$ such that it has a injective solution in $\mathfrak{G}\left(\left(a_i,a_{i+1}\right)\right)$.
\end{lem}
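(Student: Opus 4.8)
The plan is to recast both directions through the matrix formulation of Lemma~\ref{lemma:string_solution_equiv}, writing $\mathbf{c}=(c_1,c_2,c_3)$. Throughout Case~1 we have $c_1+c_2+c_3=0$ with each $c_j\neq 0$, so \emph{no nonempty proper subset of $\{c_1,c_2,c_3\}$ sums to zero} (a singleton is some $c_j\neq 0$, a pair is $c_i+c_j=-c_k\neq 0$). This is the arithmetic engine of the argument: a triple $(v_1,v_2,v_3)$ whose values lie in a two-element set $\{u,w\}$ can satisfy $c_1v_1+c_2v_2+c_3v_3=0$ only if it is constant, since writing $v_j=u+\delta_j(w-u)$ with $\delta_j\in\{0,1\}$ reduces the equation to $(w-u)\sum_{\delta_j=1}c_j=0$, forcing an impossible vanishing subset sum.

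For the (easy) backward implication, suppose some $i<n$ admits an injective solution in $\mathfrak{G}((a_i,a_{i+1}))$, realised by a matrix $N$ with mutually distinct columns coherent with $(a_i,a_{i+1})$ and $N\mathbf{c}^T=0$. I would build a matrix $M$ for $\sigma$ by stacking from top to bottom the constant rows $(a_1,a_1,a_1),\dots,(a_{i-1},a_{i-1},a_{i-1})$, then all rows of $N$, then $(a_{i+2},a_{i+2},a_{i+2}),\dots,(a_n,a_n,a_n)$. Each column of $M$ reduces to $(a_1,\dots,a_{i-1},a_i,a_{i+1},a_{i+2},\dots,a_n)=\sigma$ — no spurious merging occurs at the junctions because $\sigma$ is reduced, so adjacent $a$'s differ — the columns remain distinct since they already differ inside the $N$-block, and every row evaluates to $0$ (the constant rows by $c_1+c_2+c_3=0$, the $N$-rows by hypothesis). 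Thus $M$ is an injective solution in $\mathfrak{G}(\sigma)$.

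The forward implication is the core. From an injective — equivalently, by Observation~\ref{obs}, non-constant — solution given by a matrix $M$ with distinct columns $\sigma_1,\sigma_2,\sigma_3$ coherent with $\sigma$, I would pick the smallest row $r_0$ at which $(v_1,v_2,v_3)=(\sigma_{1,r_0},\sigma_{2,r_0},\sigma_{3,r_0})$ is not constant; it exists because the columns are distinct. Since the three columns are identical above $r_0$, they have completed exactly the same blocks there, so if $a_b$ is the value of the last nonzero entry occurring above $r_0$ (with $b=0$ if there is none), then at row $r_0$ each column is either $0$, still inside block $b$ (value $a_b$), or just entering block $b+1$ (value $a_{b+1}$); values cannot skip ahead or regress because the blocks are nonempty and ordered. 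Hence $v_1,v_2,v_3\in\{0,a_b,a_{b+1}\}$. Applying the arithmetic engine, the non-constant solution cannot sit in a two-element set, which rules out $b=0$ (values in $\{0,a_1\}$) and $b=n$ (values in $\{0,a_n\}$); so $1\le b\le n-1$ and I set $i=b$. The same engine forces $v_1,v_2,v_3$ pairwise distinct. Finally the $3\times 3$ matrix $N$ with rows $(a_i,a_i,a_i)$, $(v_1,v_2,v_3)$, $(a_{i+1},a_{i+1},a_{i+1})$ does the job: each column $(a_i,v_j,a_{i+1})$ reduces to $(a_i,a_{i+1})$ whatever the value $v_j\in\{0,a_i,a_{i+1}\}$, the columns are distinct because the $v_j$ are, and $N\mathbf{c}^T=0$ (outer rows by $c_1+c_2+c_3=0$, middle row by the choice of $r_0$). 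By Lemma~\ref{lemma:string_solution_equiv} this is an injective solution in $\mathfrak{G}((a_i,a_{i+1}))$.

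The step I expect to be most delicate is the block-tracking that confines row $r_0$ to $\{0,a_b,a_{b+1}\}$: one must argue that being identical above $r_0$ forces a common completed-block prefix yet still permits the columns to diverge at $r_0$ (this happens exactly when the total multiplicities of block $b$ differ between columns), and that no entry can jump to $a_{b+2}$ or return to $a_{b-1}$. Everything else is either routine bookkeeping or a direct appeal to $c_1+c_2+c_3=0$.
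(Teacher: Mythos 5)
Your proof is correct and follows essentially the same route as the paper's: both directions go through the matrix formulation of Lemma~\ref{lemma:string_solution_equiv}, locate a first non-constant row whose entries are forced into $\{0,a_i,a_{i+1}\}$, exploit that no nonempty proper subset of $\{c_1,c_2,c_3\}$ sums to zero, and sandwich that row between constant rows. Your block-tracking justification of why the critical row's values cannot skip ahead or regress is in fact spelled out more carefully than in the paper's own argument, but the underlying idea is identical.
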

\begin{proof} Suppose that the equation has a injective solution in $\mathfrak{G}(\sigma)$. Then, by Lemma \ref{lemma:string_solution_equiv}, there is a matrix $M$ whose columns are coherent with $\sigma$ and all pairwise distinct and satisfies $M\left(c_{1},\dots,c_{m}\right)^{T}=0$. Note that, since $M$ is coherent with $\sigma$ and $c_1+c_2+c_{3}=0$, the first line of $M$ must be $(a_1,a_1,a_1)$; as a consequence, since the solution is injective, for at least one index $1<l\leq n$ there is a row $L$ in $M$ which is the first non constant where $a_{l}$ appears, say $L=\left(h_{1},h_{2},h_{3}\right)$. Again, the condition $c_{1}+c_{2}+c_{3}=0$ forces $h_{1},h_{2},h_{3}$ to be pairwise different. This means that the previous line should be either $L_1=(a_{l-1},a_{l-1},a_{l-1})$ or $L_2 = (a_l,a_l,a_l)$.  If the previous line is $L_1$, as $M$ is coherent with $\sigma$, $L$ must be a permutation of $(0,a_l,a_{l-1})$; in this case, let 
\begin{equation*}
    M_1 = 
    \begin{pmatrix}
        a_{l-1} & a_{l-1} & a_{l-1}\\
        h_1 & h_2 & h_3\\
        a_l & a_l & a_l
    \end{pmatrix}
\end{equation*}
We have that $M_1$ is coherent with $(a_{l-1},a_l)$ and that $M_1(c_1,c_2,c_3)^T=0$, meaning that $P$ has a solution in $\mathfrak{G}\left(a_{l-1},a_{l}\right)$. Otherwise, the previous line should be $L_2$ and, for an analogous reason, $L$ must be a permutation of $(0,a_{l},a_{l+1})$; in this case, let 
\begin{equation*}
    M_2 = 
    \begin{pmatrix}
        a_l & a_l & a_l\\
        h_1 & h_2 & h_3\\
        a_{l+1} & a_{l+1} & a_{l+1}
    \end{pmatrix}
\end{equation*}
We have that $M_2$ is coherent with $(a_{l},a_{l+1})$ and that $M_2(c_1,c_2,c_3)^T=0$, meaning that $P$ has a solution in $\mathfrak{G}\left(a_{l},a_{l+1}\right)$. In both cases, the columns of the matrices are non-constant which, by Observation \ref{obs}, guarantees the presence of an injective solution.

Conversely, if the equation has a injective solution in $\mathfrak{G}\left(\left(a_i,a_{i+1}\right)\right)$, for some $i<n$, then there are distinct $h_1,h_2,h_3\in\{0,a_i,a_{i+1}\}$ such that $P(h_1,h_2,h_3)=0$. For each $j\leq 3$, let $\sigma_j =(a_1,\dots,a_i,h_j,a_{i+1},\dots,a_n)$. Then, each $\sigma_j$ is coherent with $\sigma$ and $P(\sigma_1,\sigma_2,\sigma_3)=0$. By Lemma \ref{lemma:string_solution_equiv}, the equation has a solution in $\mathfrak{G}(\sigma)$.
\end{proof}

The set of solutions of the equation $P\left(x_{1},x_{2},x_{3}\right)=0$ is a linear space generated by the vectors $(1,1,1)$ and $\left(0,c_{3},-c_{2}\right)$, so all solutions of the equation have the form
  \begin{equation*}
  x_1 = t,\;\;\; x_2 = t + c_3u\;\;\;\text{and}\;\;\; x_3 = t - c_2u
  \end{equation*}
for some $t,u\in\mathbb{Z}$. This simple observation allows to easily deduce the following:

\begin{lem}\label{facile} If $c_{1}+c_{2}+c_{3}=0$ and $\gcd\left(c_{1},c_{2},c_{3}\right)=\gcd\left(a_{1},a_{2}\right)=1$, the equation $P\left(x_{1},x_{2},x_{3}\right)=0$ has an injective solution in $\mathfrak{G}\left(\left(a_{1},a_{2}\right)\right)$ if and only if there are $1\leq i,j\leq 3$ such that a permutation of $\left(a_1,a_{2}\right)$ is equal to $\left(c_{i},-c_{j}\right)$.
\end{lem}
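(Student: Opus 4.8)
The plan is to use Lemma \ref{lemma:string_solution_equiv} to restate the problem as one about $3$-column integer matrices, and then to reduce everything to the combinatorics of permutations of the triple $(0,a_1,a_2)$. Recall that an injective solution in $\mathfrak{G}((a_1,a_2))$ is the same thing as a matrix $M$ with three mutually distinct columns, each coherent with $(a_1,a_2)$, satisfying $M(c_1,c_2,c_3)^{T}=0$. Every entry of $M$ then lies in $\{0,a_1,a_2\}$, and each row $(r_1,r_2,r_3)$ is an integer solution of $c_1r_1+c_2r_2+c_3r_3=0$; by the parametrisation of the solution space recalled just before the statement, every such row has the form $(t,\,t+c_3u,\,t-c_2u)$ for some $t,u\in\Z$.

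First I would isolate the key structural fact: \emph{every non-constant row of $M$ is a permutation of $(0,a_1,a_2)$}. Indeed, in a row $(t,\,t+c_3u,\,t-c_2u)$ with $u\neq 0$ the three pairwise differences are $c_3u$, $c_1u$, $c_2u$ (using $c_1+c_2+c_3=0$), all nonzero because each $c_i\neq 0$; hence the three entries are pairwise distinct, and since they lie in the $3$-element set $\{0,a_1,a_2\}$ the row is a permutation of it. From this I would deduce that an injective solution exists if and only if some permutation of $(0,a_1,a_2)$ already solves $P=0$. For the forward direction, distinct columns forbid $M$ from having only constant rows (those would force all columns to be equal), so $M$ has a non-constant row, which is the desired solving permutation. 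For the converse, given a solving permutation $\pi$, the matrix with rows $(a_1,a_1,a_1)$, $\pi$, $(a_2,a_2,a_2)$ has columns $(a_1,\pi_j,a_2)$ that are coherent with $(a_1,a_2)$ and pairwise distinct (they differ in the middle entry), and it annihilates $(c_1,c_2,c_3)^{T}$; by Lemma \ref{lemma:string_solution_equiv} this is an injective solution, which is exactly the mechanism behind Observation \ref{obs}.

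It then remains to translate ``some permutation of $(0,a_1,a_2)$ solves $P=0$'' into the arithmetic condition. A permutation with the zero in position $r$ yields an equation $a_pc_s+a_qc_t=0$, where $\{a_p,a_q\}=\{a_1,a_2\}$ and $\{s,t\}=\{1,2,3\}\setminus\{r\}$. Here I would first record that $\gcd(c_s,c_t)=1$: any common divisor of $c_s,c_t$ divides $c_s+c_t=-c_r$, hence divides $\gcd(c_1,c_2,c_3)=1$. Combining $\gcd(c_s,c_t)=1$ with $\gcd(a_p,a_q)=\gcd(a_1,a_2)=1$ in the relation $a_pc_s=-a_qc_t$ forces $(a_p,a_q)=\pm(c_t,-c_s)$; in either sign this says precisely that a permutation of $(a_1,a_2)$ equals $(c_i,-c_j)$ with $i=t$ and $j=s$, where $i\neq j$. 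Conversely, if (up to swapping) $(a_1,a_2)=(c_i,-c_j)$ with $i\neq j$, then placing $a_1,a_2,0$ in positions $j,i,r$ gives $c_ja_1+c_ia_2=c_jc_i-c_ic_j=0$, a solving permutation, and we are done.

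The main obstacle, and the heart of the argument, is the structural claim of the second paragraph: pinning down the correct parametrisation of the solution rows and exploiting $c_i\neq 0$ to force each non-constant row to use all three values $0,a_1,a_2$. Once this is in place the reduction is purely formal, and the only remaining care is the coprimality bookkeeping — in particular the identity $\gcd(c_s,c_t)=1$, which is what pins the scaling factor down to $\pm1$ and makes the indices $i,j$ come out distinct.
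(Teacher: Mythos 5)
Your proof is correct and takes essentially the same route as the paper's: both reduce the existence of an injective solution to the existence of a single permutation of $(0,a_1,a_2)$ solving $P=0$ (via the parametrisation $(t,\,t+c_3u,\,t-c_2u)$ of the rows) and then extract the arithmetic condition from the two gcd hypotheses; you are merely more explicit about the matrix-to-permutation reduction, which the paper asserts in one line. One remark worth recording: your converse assumes $i\neq j$ and your forward direction only ever produces $i\neq j$ (exactly as in the paper's case analysis), and indeed the statement must be read with $i\neq j$ --- with $i=j$ allowed the ``if'' direction fails, e.g.\ for $(c_1,c_2,c_3)=(1,2,-3)$ and $(a_1,a_2)=(1,-1)$ the displayed condition holds with $i=j=1$, yet no permutation of $(0,1,-1)$ solves $x_1+2x_2-3x_3=0$, so by your own structural claim there is no injective solution in $\mathfrak{G}\left(\left(1,-1\right)\right)$.
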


\begin{proof} There is an injective solution in $\mathfrak{G}\left(\left(a_{1},a_{2}\right)\right)$ if and only if there are $t,u\in\mathbb{Z}$ such that $(t,t+c_{3}u,t-c_{2}u)$ is a permutation of $(0,a_{1},a_{2})$ . Now we just have to check the possible cases. 

If $t-c_2u=0$, then either $a_1=t=c_2u$ and $a_2=t+c_{3}u=(c_2+c_3)u$ or (analogously) $a_1=(c_2+c_3)u$ and $a_2=c_2u$; in both cases, as $\gcd(a_1,a_2)=1$, we have $u=1$; from the fact that $c_1+c_2+c_3=0$, we have in the first case $a_1=c_2$ and $a_2=-c_1$, and in the second case $a_1=-c_1$ and $a_2=c_2$. The cases $t+c_3u=0$ or $t=0$ are analogous.

\end{proof}

Putting together Lemmas \ref{thm:injective_sol_pol_3_var_sum_coef_zero} and \ref{facile}, and observing by linearity that $c_{1}x_{1}+c_{2}x_{2}+c_{3}x_{3}=0$ is solvable in $\mathfrak{G}\left(\sigma\right)$ if and only if $kc_{1}x_{1}+kc_{2}x_{2}+kc_{3}x_{3}=0$ is solvable in $\mathfrak{G}\left(h\sigma\right)$ for some $h,k\in\Z\setminus\{0\}$, we deduce the general characterisation.

\begin{prop} If $c_{1}+c_{2}+c_{3}=0$ and $\sigma=\left(a_{1},\dots,a_{n}\right)\in\Z^{<\omega}$, the equation $P\left(x_{1},x_{2},x_{3}\right)=0$ has an injective solution in $\mathfrak{G}(\sigma)$ if and only if there are $1\leq i,j\leq 3$, $h\leq n-1$ and $r,s\in\Z\setminus\{0\}$ such that $r\left(a_h,a_{h+1}\right)=s\left(c_{i},-c_{j}\right)$.
\end{prop}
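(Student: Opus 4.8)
The plan is to combine the two preceding lemmas with the linearity remark. First I would invoke Lemma~\ref{thm:injective_sol_pol_3_var_sum_coef_zero} to reduce the problem from the arbitrary string $\sigma$ to a single consecutive pair: $P(x_1,x_2,x_3)=0$ has an injective solution in $\mathfrak{G}(\sigma)$ if and only if there is some $h\leq n-1$ for which it has an injective solution in $\mathfrak{G}\big((a_h,a_{h+1})\big)$. This $h$ is exactly the index appearing in the statement, so it remains to characterise, for a fixed consecutive pair, injective solvability in terms of the proportionality condition $r(a_h,a_{h+1})=s(c_i,-c_j)$.

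Next I would normalise so that Lemma~\ref{facile} applies. Write $e=\gcd(c_1,c_2,c_3)$ and $d=\gcd(a_h,a_{h+1})$, and set $c_l'=c_l/e$ and $(a_h',a_{h+1}')=(a_h/d,a_{h+1}/d)$. Since scaling the coefficient vector by a nonzero integer and scaling the string by a nonzero integer both preserve injective solvability in the corresponding $\mathfrak{G}$-class (this is the linearity observation preceding the statement, together with the fact that $\tau\mapsto h\tau$ sends $\approx$-equivalences to $\approx$-equivalences for $h\neq 0$), injective solvability of $P=0$ in $\mathfrak{G}\big((a_h,a_{h+1})\big)$ is equivalent to injective solvability of $\sum_l c_l' x_l=0$ in $\mathfrak{G}\big((a_h',a_{h+1}')\big)$, where now $\gcd(c_1',c_2',c_3')=\gcd(a_h',a_{h+1}')=1$. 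Lemma~\ref{facile} then says this happens precisely when a permutation of $(a_h',a_{h+1}')$ equals $(c_i',-c_j')$ for some $1\leq i,j\leq 3$.

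The remaining step, which I expect to be the only delicate point, is to show that this normalised equality-up-to-permutation statement coincides with the proportionality statement $r(a_h,a_{h+1})=s(c_i,-c_j)$ with $r,s\in\Z\setminus\{0\}$. For the forward implication this is immediate: un-normalising $(a_h',a_{h+1}')=(c_i',-c_j')$ by multiplying through by $e$ and $d$ gives $e(a_h,a_{h+1})=d(c_i,-c_j)$, while the swapped case $(a_h',a_{h+1}')=(-c_j',c_i')$ gives $e(a_h,a_{h+1})=-d(c_j,-c_i)$, which is of the required form after exchanging the roles of $i$ and $j$ and taking $s=-d$; thus the freedom to choose the signs of $r,s$ and to relabel $i,j$ absorbs the permutation. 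For the converse I would start from $r(a_h,a_{h+1})=s(c_i,-c_j)$, substitute the primitive decompositions and divide out the $\gcd$ to obtain $A(a_h',a_{h+1}')=B(c_i',-c_j')$ with $\gcd(A,B)=1$, and then argue $A=\pm1=B$. Indeed $\gcd(A,B)=1$ gives $A\mid c_i'$ and $A\mid c_j'$; here I would use that $c_1+c_2+c_3=0$ together with $\gcd(c_1',c_2',c_3')=1$ makes \emph{every} pair $\{c_i',c_j'\}$ coprime (a common divisor of two of the $c_l'$ divides the third, hence divides $\gcd(c_1',c_2',c_3')=1$), whence $A\mid\gcd(c_i',c_j')=1$; symmetrically $B\mid\gcd(a_h',a_{h+1}')=1$, so $B=\pm1$. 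Therefore $(a_h',a_{h+1}')=\pm(c_i',-c_j')$, and absorbing the sign by a permutation and relabelling exactly as above exhibits a permutation of $(a_h',a_{h+1}')$ of the form demanded by Lemma~\ref{facile}. Feeding this back through the normalisation and Lemma~\ref{thm:injective_sol_pol_3_var_sum_coef_zero} closes the equivalence. The main obstacle is purely this bookkeeping of signs and index permutations; the arithmetic core is the one-line primitivity argument enabled by the hypothesis $c_1+c_2+c_3=0$.
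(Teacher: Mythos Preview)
Your proposal is correct and follows precisely the paper's own route: the paper's proof is the single sentence ``Putting together Lemmas~\ref{thm:injective_sol_pol_3_var_sum_coef_zero} and~\ref{facile}, and observing by linearity that $c_{1}x_{1}+c_{2}x_{2}+c_{3}x_{3}=0$ is solvable in $\mathfrak{G}(\sigma)$ if and only if $kc_{1}x_{1}+kc_{2}x_{2}+kc_{3}x_{3}=0$ is solvable in $\mathfrak{G}(h\sigma)$,'' and your write-up simply unpacks this, supplying the gcd-normalisation and sign/permutation bookkeeping that the paper leaves implicit. One small remark: your coprimality argument for the converse tacitly uses $i\neq j$ (so that a common divisor of $c_i',c_j'$ divides the third coefficient), which is indeed the intended reading of the statement and of Lemma~\ref{facile}.
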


\begin{exam}Let $P\left(x_1,x_2,x_3\right) = 3x_1 - 5x_2 + 2x_3$ and consider $\sigma = (5,7,-10,-6,13)$. Then $P\left(x_1,x_2,x_3\right)=0$ has an injective solution in $\mathfrak{G}(\sigma)$, because $(-10,-6)$ is multiple of $(-5,-3)$.
\end{exam}

\textbf{Case 2:} Let us suppose now that there is a pair among $c_1$, $c_2$ and $c_3$ that sums zero; without loss of generality, we can consider $c_1=-c_2 = c$,  $c_3=d$ and $\gcd(c,d)=1$, namely $P\left(x_1,x_2,x_3\right)= c\left(x_1-x_2\right) + dx_3$. We shall show that the only equation satisfying the mentioned conditions solvable in $\mathfrak{G}(\sigma)$, for some $\sigma\in\Z^{<\omega}$, is Schur's equation $x_{1}-x_{2}+x_{3}=0$.

\begin{prop}\label{ultimo} Consider the polynomial $P\left(x_1,x_2,x_3\right)= c\left(x_1-x_2\right) + dx_3$, with $c,d\in\Z\setminus\{0\}$ and $\gcd(c,d)=1$. The following facts are equivalent:
\begin{enumerate}
\item there is a non-empty reduced string $\sigma=\left(a_{1},\dots,a_{n}\right)$ such that $\sigma\U\models P\left(x_1,x_2,x_3\right)=0$ for all additively idempotent ultrafilters $\U\in\beta\Z$;
\item $\sigma\U\models P\left(x_1,x_2,x_3\right)=0$ for all additively idempotent ultrafilters $\U\in\beta\Z$ and for all non empty reduced strings $\sigma\in\Z^{<\omega}$;
\item $|c|=|d|=1$, i.e. $P\left(x_1,x_2,x_3\right)$ is Schur's polynomial $x_{1}+x_{2}-x_{3}$.
\end{enumerate}
\end{prop}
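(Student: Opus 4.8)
The plan is to establish the cycle $(3)\Rightarrow(2)\Rightarrow(1)\Rightarrow(3)$, using Theorem \ref{lingen} as the bridge between the ultrafilter statements and the purely combinatorial statement ``$P$ has a solution in $\mathfrak{G}(\sigma)$''. The implication $(2)\Rightarrow(1)$ is immediate, since $(2)$ holds for \emph{every} non-empty reduced string. For $(3)\Rightarrow(2)$, observe that $|c|=|d|=1$ makes $P$, up to an overall sign, equal to $x_1-x_2\pm x_3$; so it suffices to produce, for an arbitrary reduced $\sigma=(a_1,\dots,a_n)$, a solution of $P=0$ in $\mathfrak{G}(\sigma)$ and then invoke Lemma \ref{lemma:lingen_lemma_1}. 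I would use the explicit interleaving
\[
\sigma_{\mathrm{odd}}=(a_1,0,a_2,0,\dots,a_n,0),\quad \sigma_{\mathrm{even}}=(0,a_1,0,a_2,\dots,0,a_n),\quad \sigma_{\mathrm{sum}}=(a_1,a_1,\dots,a_n,a_n),
\]
all three coherent with $\sigma$ and satisfying $\sigma_{\mathrm{odd}}+\sigma_{\mathrm{even}}=\sigma_{\mathrm{sum}}$; assigning these to $x_1,x_2,x_3$ in the order dictated by the sign of $d$ solves the equation in $\mathfrak{G}(\sigma)$.

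The substance is $(1)\Rightarrow(3)$. By Theorem \ref{lingen} the hypothesis yields, via Lemma \ref{lemma:string_solution_equiv}, a matrix with columns $\sigma_1,\sigma_2,\sigma_3$ coherent with $\sigma$ and $c\sigma_1-c\sigma_2+d\sigma_3=0$. Since $\sigma_3$ is coherent with $\sigma$ it attains \emph{every} value $a_i$, and at a row where $\sigma_3=a_i$ the identity reads $c(\sigma_1-\sigma_2)=-d\,a_i$; hence $c\mid d\,a_i$, and $\gcd(c,d)=1$ forces $c\mid a_i$ for every $i$. Writing $a_i=c\,b_i$ gives a reduced string $\beta=(b_1,\dots,b_n)$ whose class admits a solution of the \emph{same} equation (the columns of the matrix are $c$ times columns coherent with $\beta$). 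Iterating, $c^{m}\mid a_1$ for all $m$; as $a_1\neq0$, this infinite descent forces $|c|=1$.

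It remains to prove $|d|=1$, which I expect to be the main obstacle. After replacing $(c,d)$ by $(-c,-d)$ if needed we may take $c=1$, so $\sigma_1-\sigma_2+d\sigma_3=0$. Let $A$ be the maximal absolute value among the $a_i$. At a row where $\sigma_3$ is an extremal value ($\pm A$) one gets $|\sigma_1-\sigma_2|=|d|A$, and since every entry is bounded by $A$ this yields $|d|\le2$. To exclude $|d|=2$ I would argue by coherence: at an extremal row the bound is \emph{tight}, so $\{\sigma_1,\sigma_2\}=\{A,-A\}$ with the assignment completely determined by the signs of $d$ and $\sigma_3$; in particular both $A$ and $-A$ occur in $\sigma$, and one of the two columns, say $\sigma_1$, is forced—at the rows where $\sigma_3$ is extremal, read in increasing order—to produce exactly the \emph{negation} of the sequence $e_1,\dots,e_k\in\{A,-A\}$ of extremal entries of $\sigma$. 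But $\sigma_1$ is itself coherent with $\sigma$, so the extremal values it reads must appear in the order $e_1,\dots,e_k$. Encoding the two orders as a nondecreasing map $\phi$ on the index set of extremal entries satisfying $e_{\phi(j)}=-e_j$, a short induction on the number of sign changes of $(e_j)$ shows that no such $\phi$ exists once both signs occur, giving the contradiction and hence $|d|=1$. The delicate point is precisely this ordering argument: the magnitude estimate alone only gives $|d|\le2$, and eliminating $|d|=2$ genuinely requires the interaction between the forced extremal pairs and the order in which any string coherent with $\sigma$ must visit the values $\pm A$.
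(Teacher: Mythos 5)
Your proposal is correct and follows essentially the same route as the paper: $|c|=1$ via the divisibility $c\mid a_i$ (the paper normalises $\gcd(a_1,\dots,a_n)=1$ where you run an infinite descent), $|d|\le 2$ from the maximal entry, and the exclusion of $|d|=2$ by the same coherence/ordering obstruction, which the paper phrases as the construction of an impossible infinite alternating substring $(a,-a,a,-a,\dots)$ rather than as the nonexistence of your nondecreasing sign-flipping map $\phi$ (your asserted induction on sign changes does go through, e.g.\ by showing $\phi(j)\ge\min B_{t+1}$ for $j$ in the $t$-th constant-sign block of the extremal entries). The only other, cosmetic, difference is that for $(3)\Rightarrow(2)$ the paper invokes the IP-set property of idempotents together with Theorem \ref{theorem:facts_about_witnessing_pr}(ii) instead of your explicit interleaved string solution fed into Lemma \ref{lemma:lingen_lemma_1}.
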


\begin{proof} The implication $(3)\Rightarrow (2)$ is a consequence of Theorem \ref{theorem:facts_about_witnessing_pr} and the fact that any member of an additively idempotent ultrafilter on $\Z$ is an IP-set; the implication $(2)\Rightarrow (1)$ is trivial.

To conclude, let us show that $(1)\Rightarrow (3)$. Let $\sigma=\left(a_{1},\dots,a_{n}\right)$, where we assume without loss of generality that $\gcd(a_1,\dots,a_n)=1$, and assume that the equation $P\left(x_1,x_2,x_3\right)=0$ has a solution in $\mathfrak{G}(\sigma)$. First, let us show that it must be that $|c|=1$. In fact, let $M$ be a $k\times 3$, coherent matrix with $\sigma$, such that $M(c,-c,d)=0$; as each $a_i$ has to appear in the third column of $M$, we always find  $u_i,v_i\in\{0,a_1,\dots,a_n\}$ such that
\begin{equation}\label{equation:ultimo_resultado}
c\left(u_i-v_i \right)=da_{i} 
\end{equation}

As $\gcd(c,d)=1$, the equation (\ref{equation:ultimo_resultado}) shows that, for each $i\leq n$, $c$ divides $a_{i}$. As we assumed $gcd\left(a_{1},\dots,a_{n}\right)=1$, this forces $|c|=1$.

Now we prove that $|d|\geq 3$ cannot occur. Indeed, suppose that $|d|\geq 3$ and consider $a=\max\{|a_{i}| \mid i\leq n\}$. Let $i_0:=\min\{i\leq n\mid  |a_{i}|=a\}$. By changing $\sigma$ with $-\sigma$, if necessary, we can assume without loss of generality that $a_{i_{0}}=a$. By looking at the appearance of $a_{i_0}$ in the third column of $M$, one can find $u_0,v_0\in\{0,a_1,\dots,a_n\}$ such that $u_0 - v_0 + da_{i_0}=0$. Hence, we have that $|u_0 - v_0|=|da|$, which is absurd, since $|u_0 - v_0|\leq 2a$.

Finally, let us prove that $|d|$ cannot be 2. Let us proceed by contradiction; by multiplying $P\left(x_{1},x_{2},x_{3}\right)$ by $-1$, if needed, we can assume that $d=2$. The idea is to show that, in the string $\sigma=\left(a_{1},\dots,a_{n}\right)$, whenever $a$ appears with index $i\leq n$, then $-a$ must appear with an index $j>i$; and, analogously, whenever $-a$ appear with an index $j'$, then $a$ must appear with an index $i'>j'$. Of course this cannot happen, as it would generate an infinite substring $(a,-a,a,-a,\dots)$ of the finite string $\sigma$. To construct this substring, we proceed by recursion on the indices of $\sigma$. Let again $a = \max\{|a_i|\mid i\leq n\}$ and $i_0 = \min\{i\leq n\mid |a_i|=a\}$.

As before, we may assume that $a_{i_0}=a$. Then, there are $u_0,v_0\in\{0,a_1,\dots,a_n\}$ such that $(u_0, v_0,a)$ is a line of $M$, say with index $k_0\leq k$, which implies that $u_0 - v_0=2a$. Hence, by the maximality of $a$, it must be $u_0=a$ and $v_0=-a$. As $v_0\neq 0$ and by the minimality of $i_0$, 
\begin{equation*}
    I_0 = \{i\leq n\mid  i>i_0\text{ and } a_i = -a\}\neq\emptyset. 
\end{equation*}
Let $i_1=\min I_0$ and, by minimality of $i_0$, observe that $a_{i_0}$ cannot appear in the second column of any line of $M$ whose index is greater than $k_0$.

Now assume that, for a $r\leq n$, we have defined two increasing lists, namely $i_0,\dots,i_r\in\{1,\dots,n\}$ and $k_0,\dots,k_r\in\{1,\dots,k\}$, indexing the components of $\sigma$ and the lines of the matrix $M$, respectively, in such a way that
\begin{enumerate}
  \item for each $j\in\{0,\dots,r\}$, $a_{i_j}=(-1)^ja$;
  \item for each $j\in\{0,\dots,r\}$, $a_{i_j}$ appears in the third column of $M$ at the line $k_j$; and
  \item fixing $j,l\in\{0,\dots,r\}$ such that $l\leq j$, $a_{i_l}$ do not appear in the second column of any line greater than or equal to $k_j$.
\end{enumerate}

Thus, there must be $u_{r},v_{r}\in\{0,a_1,\dots,a_n\}$ such that the $k_r$-th line of $M$ is $(u_{r},v_{r},a_{i_r})$, which means that $u_r-v_r=2a_{i_r}$, which can only happen if $u_r=a_{i_r}$ and $v_r=-a_{i_r}$. By the item (3) above, we see that 
\begin{equation*}
   I_r = \{i\leq n\mid  i>i_r\text{ and }a_{i}=-a_{i_r}\}\neq\emptyset.  
\end{equation*}

Define $i_{r+1}=\min I_r$ and let $k_{r+1}$ be the line of $M$ where $a_{i_{r+1}}$ appears in the third column. Then, the lists $i_0,\dots,i_{r+1}$ and $k_0,\dots,k_{r+1}$ must be increasing and also satisfy conditions (1), (2) and (3) above. This shows that if $d=2$, there is an infinite substring $(a,-a,a,-a,\dots)$ of the finite string $\sigma$, which is absurd.
\end{proof}

Notice that, as a straight consequence of Proposition \ref{ultimo}, we get the existence of plenty of Rado equations that are not solvable in $\mathfrak{G}(\sigma)$, for all $\sigma\in\Z^{<\omega}$. Precisely:
\begin{cor}\label{shursequation}
Let $c,d\in\Z\setminus\{0\}$ such that $\gcd(c,d)=1$. Then, the following are equivalent:
\begin{enumerate}
    \item there exists a $\sigma\in\Z^{<\omega}$ such that, for all additively idempotent ultrafilter $\U\in\bZ$, $\sigma\U\models c(x-y)+dz=0$; and
    \item c=d=1.
\end{enumerate}
\end{cor}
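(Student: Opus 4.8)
The plan is to read the corollary off directly from Proposition \ref{ultimo}, of which it is essentially a restatement. Setting $P(x_1,x_2,x_3)=c(x_1-x_2)+dx_3$ and renaming the variables $x,y,z$ as $x_1,x_2,x_3$, condition (1) of the corollary is word-for-word condition (1) of Proposition \ref{ultimo}, and the standing hypotheses $c,d\in\Z\setminus\{0\}$ and $\gcd(c,d)=1$ coincide. Hence the entire content of the corollary is the equivalence between conditions (1) and (3) of that proposition, the only new bookkeeping being to reconcile condition (3), namely $|c|=|d|=1$, with the displayed equality $c=d=1$; as announced in the Introduction, this identification is understood \emph{up to multiplication of the equation by a scalar}, since $c(x-y)+dz=0$ and $\lambda c(x-y)+\lambda d z=0$ have the same solution set for every $\lambda\in\Z\setminus\{0\}$, so that $x-y+z=0$ serves as the canonical representative.

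First I would dispatch the implication $(2)\Rightarrow(1)$. If $c=d=1$ the equation reads $x-y+z=0$, i.e.\ (a relabelling of) Schur's equation. Taking $\sigma=(1)$ one has $\sigma\U=\U$, and since every member of an additively idempotent ultrafilter is an IP-set it contains a triple $\{a,b,a+b\}$, which solves $x-y+z=0$ with $x=a$, $z=b$, $y=a+b$; thus $\U\models x-y+z=0$ for every additively idempotent $\U\in\bZ$, giving (1). Equivalently this is the chain $(3)\Rightarrow(2)\Rightarrow(1)$ internal to Proposition \ref{ultimo}, or a direct appeal to item (iv) of Theorem \ref{theorem:facts_about_witnessing_pr}.

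For the substantive implication $(1)\Rightarrow(2)$ I would invoke Proposition \ref{ultimo} as a black box: the existence of a non-empty reduced $\sigma$ with $\sigma\U\models c(x-y)+dz=0$ for all additively idempotent $\U$ is precisely its condition (1), which forces $|c|=|d|=1$; under the scalar-normalisation convention recalled above this is recorded as $c=d=1$, closing the equivalence.

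I do not expect a genuine obstacle at the level of the corollary itself, as all the difficulty is already absorbed into Proposition \ref{ultimo}: ruling out $|c|\geq 2$ via the divisibility relation $c(u_i-v_i)=da_i$ forced by the appearance of each $a_i$ in the third column of a coherent matrix, and, more delicately, ruling out $|d|=2$ by extracting the impossible infinite alternating substring $(a,-a,a,-a,\dots)$ from the finite string $\sigma$. The only care demanded at this stage is the sign bookkeeping in passing from $|c|=|d|=1$ to the normalised form $c=d=1$.
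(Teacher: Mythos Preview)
Your proposal is correct and matches the paper's approach: the corollary is presented there with no separate proof, introduced simply as ``a straight consequence of Proposition \ref{ultimo}'', so deriving it by identifying condition (1) of the corollary with condition (1) of Proposition \ref{ultimo} and reading off condition (3) is exactly what the authors intend. You were also right to flag the discrepancy between $|c|=|d|=1$ and the displayed $c=d=1$; the paper does not address this in the corollary itself, but the Introduction makes clear the statement is meant \emph{up to multiplication by a scalar}, which is precisely the normalisation you invoke.
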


\section{Acknowledgements}
The authors thank Sohail Farhangi for pointing us to the proof of the equivalences between (\ref{theorem:diNasso_1}) and (\ref{theorem:diNasso_2}) in the Theorem \ref{theorem:diNasso}. The authors also thank the anonymous reviewer for her/his helpful comments.

\end{document}